\newtheorem{theorem}{Theorem}[section]
\newtheorem{lemma}[theorem]{Lemma}
\newtheorem*{lemma*}{Lemma}
\theoremstyle{definition}
\newtheorem{definition}[theorem]{Definition}
\newtheorem{example}[theorem]{Example}
\newtheorem{conjecture}[theorem]{Conjecture}
\theoremstyle{remark}
\newtheorem{remark}[theorem]{Remark}
\numberwithin{equation}{section}
\newcommand{\abs}[1]{\lvert#1\rvert}
\newcommand{\A}{\mathbb{A}}
\newcommand{\C}{\mathbb{C}}
\newcommand{\Q}{\mathbb{Q}}
\newcommand{\W}{\mathscr{W}}
\newcommand{\R}{\mathbb{R}}
\newcommand{\X}{\mathbb{X}}
\newcommand{\Y}{\mathbb{Y}}
\newcommand{\dtext}{\textnormal d}
\newcommand{\onto}{\xrightarrow[]{{}_{\!\!\textnormal{onto\,\,}\!\!}}}
\newcommand{\bydef}{\stackrel {\textnormal{def}}{=\!\!=} }
\DeclareMathOperator{\diam}{diam}
\DeclareMathOperator{\dist}{dist}
\DeclareMathOperator{\loc}{loc}
\DeclareMathOperator{\Id}{Id}
\def\le{\leqslant}
\def\ge{\geqslant}
\begin{document}

\title{A Neohookean model of plates}

\author[T. Iwaniec]{Tadeusz Iwaniec}
\address{Department of Mathematics, Syracuse University, Syracuse,
NY 13244, USA}
\email{tiwaniec@syr.edu}

\author[J. Onninen]{Jani Onninen}
\address{Department of Mathematics, Syracuse University, Syracuse,
NY 13244, USA and  Department of Mathematics and Statistics, P.O.Box 35 (MaD) FI-40014 University of Jyv\"askyl\"a, Finland}
\email{jkonnine@syr.edu}

\author[P. Pankka]{Pekka Pankka}
\address{Department of Mathematics and Statistics, P.O. Box 68 (Gustaf H\"allstr\"omin katu 2b), FI-00014 University of Helsinki, Finland}
\email{pekka.pankka@helsinki.fi}

\author[T. Radice]{Teresa Radice}
\address{Universit\'a degli Studi di Napoli ``Federico II'', Dipartimento di Matematica e Applicazioni ``R. Caccioppoli''
Via Cintia, 80126 Napoli, Italy}
\email{teresa.radice@unina.it}

\thanks{ T. Iwaniec was supported by the NSF grant DMS-1802107.
J. Onninen was supported by the NSF grant DMS-1700274. P. Pankka was  supported by the Academy of Finland project \#297258.}

\subjclass[2010]{Primary 73C50; Secondary  35J25, 26B99}

\date{\today}

\keywords{Neohookean materials, Minimizers, Monotone mappings,  the principle of non-interpenetration of matter}

\maketitle

\begin{abstract}  This article is about hyperelastic deformations of plates (planar domains) which minimize a neohookean type energy. Particularly, we investigate  a stored energy functional introduced by J.M. Ball  in his seminal paper ``\textit{Global invertibility of Sobolev functions and the interpenetration of matter}". The mappings under consideration are Sobolev homeomorphisms and their weak limits. They are monotone in the sense of C. B. Morrey. One major advantage of adopting monotone Sobolev mappings lies in the existence of the energy-minimal deformations. However, injectivity is inevitably lost, so an obvious question to ask is:  what are the largest subsets of the reference configuration on which minimal deformations remain injective? The fact that such subsets have full measure should be compared with the notion of \textit{global invertibility} which deals with subsets of the deformed configuration instead. In this connection we present a Cantor type construction to show that both the branch set and its image may have positive area. Another novelty of our approach lies in allowing the elastic deformations be free along the boundary, known as \textit{frictionless problems.}
\end{abstract}


\section{Introduction}
We study hyperelastic deformations of neohookean materials in planar domains called plates.  These problems are motivated by recent remarkable relations between Geometric Function Theory (GFT)~\cite{AIMb, HKb,  IMb, Reb}  and the theory of Nonlinear Elasticity (NE)~\cite{Anb, Bac, Cib}. Both theories are governed by variational principles. Here  we confine ourselves to deformations of bounded Lipschitz domains $\X,\Y \subset \mathbb R^2 \simeq \mathbb C\,$ of finite connectivity. The general theory of hyperelasticy deals  with Sobolev homeomorphisms $h \colon \X \onto \Y$ of nonnegative Jacobian determinant, $J_h \bydef \textrm{det} Dh \geqslant 0$, which minimize a given  stored energy functional
\[\mathcal E [h] \bydef \int_\X E\left(\abs{Dh} , \det Dh\right)\, \dtext x ,\, \textnormal{ where }   E \colon \R_+ \times \R_+ \to \R.\]
The stored energy function  $\,E \colon \R_+ \times \R_+ \to \R\,$, is determined  by the elastic and mechanical properties of the material.

Here the $\,2\times 2\,$-matrix $Dh\in \R^{2 \times 2}$ is referred to as the deformation gradient and $\abs{Dh}$ denotes its Hilbert-Schmidt norm. We are largely concerned with orientation-preserving homeomorphisms $h \colon \X \onto \Y$ of the Sobolev class $\W^{1,p} (\X, \C)$, denoted by $\mathscr H^{p} (\X, \Y)$, as well as their weak and strong limits.
If $p\ge 2$ then every $h\in \mathscr H^{p} (\X, \Y)$ extends  continuously up to the boundary, still denoted by $h \colon \overline{\X }\to \overline{\Y}$, see~\cite{IObound}.

The term neohookean refers to a stored energy function $E$ which increases to infinity when $J_h$ approaches zero. The neohookean materials have gain a lot of interest  in mathematical models of nonlinear elasticity~\cite{Ba1, BPO1, CL, Ev, SiSp}. A model example takes the form
\begin{equation}
\mathsf E_q^p [h] = \int_\X \left[ \abs{Dh}^p+\frac{1}{ (\det Dh)^q} \right]\, \dtext x \, ,  \quad p >1 \textnormal{ and } q>0\, .
\end{equation}
Throughout this paper we tacitly assume that the class of admissible homeomorphisms is nonempty; that is, there is $h \in \mathscr H^{p} (\X, \Y ) $ such that $\mathsf E_q^p [h] < \infty$. In particular, $\,\mathbb X\,$ and $\,\mathbb Y\,$ are of the same topological type.
As a first step toward understanding the existence problems we shall accept  the weak limits of energy-minimizing sequences of homeomorphisms as legitimate deformations. Thus we allow so-called \textit{weak interpenetration of matter}; precisely, squeezing of the material can occur. This changes the nature of minimization problem to the extent that the minimal energy (usually attained) can be strictly smaller than the infimum energy among homeomorphisms.

In a seminal work of J. M. Ball~\cite{Ba0}   injectivity properties were studied for  pure displacement problems. That is, the admissible deformations are specified a priori on the entire boundary of the reference configuration $\X$. More specifically,  choose and fix $\varphi  \in \mathscr H^{p} (\X, \Y),\, p \geqslant 2$, and introduce the following class of admissible deformations:
\[\mathcal A^p \bydef \{h \in \mathscr C (\overline{\X}, \C) \cap \W^{1,p} (\X, \C) \colon J(x,h)>0\;  \textnormal{ a.e.}, \;  h=\varphi \textnormal{ on } \partial \X\}\]

J. M. Ball~\cite{Ba0} proved the following:
\begin{itemize}
\item  If $p>2$ and $q>0$, then there exist the energy-minimal map $h_\circ \in \mathcal A^p$ such that
\[ \mathsf E_q^p [h_\circ]= \underset{h\in \mathcal A^p}{\inf} \mathsf E_q^p [h]  \, . \]
\item For every  $h\in \mathcal A^p $ with $p>2$ the following set has full measure:
\begin{equation}\label{eq:ballinjae}
{\mathbb Y}_{h} \bydef  \{y \in h(\overline{\X})  \colon h^{-1} (y) \textnormal{ is a single point }\} \subset \overline{\mathbb Y }\end{equation}
\end{itemize}
This result has been referred to as {\it global invertibility} for two reasons. First, because $\mathbb Y_{h}$ has full measure in $\overline{\mathbb Y}$. Second,  because any minimizer  $\,h_\circ\,$ upon restriction to $\,h_\circ^{-1}(\mathbb Y_{h_\circ})\,$ becomes injective. For further generalizations of the global invertibility result when $p \ge 2$ we refer to~\cite{FG}.

P.G. Ciarlet and J. Ne\v cas~\cite{CN} studied mixed boundary value problems (the displacement is prescribed only on a portion of the boundary of the reference configuration $\X$). In their mixed problems the pure displacement condition $\,h = \varphi\,$  on $\,\partial \mathbb X\,$ is replaced by 
\begin{equation}\label{CNcondition}\int_\X \det Dh (x)\, \dtext x \le \abs{h(\X)}  \, . 
\end{equation}
 They showed that the minimizers of $\mathsf E_q^p$, $ p > 2$,  subject to such class of deformations are globally invertible.

Usually, in GFT  the boundary values of homeomorphisms $h$ are not given. For example extremal Teichm\"uller quasiconformal mappings are not prescribed on the boundary; the boundary does not even exist for compact Riemann surfaces. In NE this is interpreted as saying that the elastic  deformations are allowed to slip along the boundary,  known as {\it frictionless problems}~\cite{Bac, Ba11, Cib, CN}. 

  Our goal is to enlarge the class of homeomorphisms (as little as possible) to ensure the existence of minimizers in that class.  The right way is to adopt  the {\it monotone Sobolev mappings}~\cite{IOmono}.  Indeed, that this class is a bare minimal enlargement of homeomorphisms follows from  a Sobolev variant of   the classical Youngs' approximation theorem.  Its classical topological setting  asserts that a  continuous map between compact oriented topological $2$-manifolds (such as plates and  thin films) is monotone  if and only if it is a uniform limit of homeomorphisms. Monotonicity, the concept of C.B. Morrey~\cite{Mor}, simply means that for a continuous $h \colon \overline{\X} \to \overline{\Y}$ the preimage $h^{-1} (C)$ of a continuum  $\,C \subset\overline{\Y}$ is a continuum in $\overline{\X}$.  The above-mentioned Sobolev variant reads as: 
\begin{theorem}[Approximation by Sobolev Homeomorphisms] \label{thmmono}~\cite{IOmono}
Let $\X$ and $\Y$ be bounded Lipschitz planar domains. Suppose that  $h \colon \overline{\X} \onto \overline{\Y}$ is a monotone Sobolev mapping in $\mathscr W^{1,p} (\X, \mathbb R^2)$, $1< p <\infty$. Then $h$  can be approximated  in norm topology of  $\mathscr W^{1,p} (\X, \mathbb R^2)$ (and uniformly) by homeomorphisms  $h_j\, \colon \X \onto \Y$.
\end{theorem}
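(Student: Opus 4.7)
The plan is to construct the homeomorphic approximants by local surgery: on small regions where $h$ collapses nontrivial continua, replace $h$ by a tailored injective model map while matching its boundary values.

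First I would reduce to the case of simply connected Jordan domains --- and then to $\X = \Y = \DD$ --- by cutting $\X$ and $\Y$ along finitely many bilipschitz arcs so that $h$ remains monotone on each piece and the pieces match across the cuts; Lipschitz regularity of $\partial \X$ and $\partial \Y$ permits this. Next, fix $\varepsilon > 0$. Since $h$ is continuous and monotone with $\int_\X J_h \le \abs{\Y} < \infty$, only finitely many fibres $K_y := h^{-1}(y)$ can have diameter exceeding $\varepsilon$. Using upper semicontinuity of the fibre decomposition, choose pairwise disjoint closed topological disks $U_1, \dots, U_N \subset \overline{\X}$, each a small neighbourhood of one such large fibre, with $\sum_i \abs{U_i} < \varepsilon$ and $\diam h(U_i) < \varepsilon$.

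On $\X \setminus \bigcup_i U_i$ every fibre of $h$ has diameter at most $\varepsilon$, so a piecewise affine correction of size $O(\varepsilon)$ produces a homeomorphism $\tilde h$ there that differs from $h$ by $o(1)$ in $\W^{1,p}$ as $\varepsilon \to 0$. Inside each $U_i$ the boundary values $h|_{\partial U_i}$ form a monotone parametrization of a Jordan curve contained in the small target set $h(U_i) \subset \overline{\Y}$; by the Schoenflies theorem together with a planar Sobolev extension lemma, one extends these boundary values to an injective map $U_i \to h(U_i)$ whose $p$-energy is controlled by $\int_{U_i} \abs{Dh}^p + \abs{U_i}$. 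Gluing across each $\partial U_i$ yields a homeomorphism $h_j \colon \X \onto \Y$; letting $\varepsilon \to 0$ one then obtains $h_j \to h$ both uniformly and in $\W^{1,p}(\X, \R^2)$ by absolute continuity of $\int \abs{Dh}^p$.

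The hard step is producing the injective fillings inside each $U_i$ with a Sobolev energy dominated by the energy of $h$ there. This reduces to the following planar extension problem: given a continuous monotone parametrization of a Jordan curve by $\partial \DD$, construct an injective Sobolev extension to $\DD$ with small $\W^{1,p}$ energy. For $p \ge 2$ one can lean on harmonic or Douglas-type variational arguments; for $1 < p < 2$ the boundary trace is less regular and the construction must exploit planar topology more carefully, for instance via a Teichm\"uller-type reparametrization. Assembling the local fillings coherently while preserving monotonicity of the global map at the seams is the principal technical hurdle.
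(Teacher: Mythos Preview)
The paper does not prove this theorem; it is quoted from \cite{IOmono} and used as a black box. So there is no in-paper proof to compare against, and your proposal must be judged on its own.

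There are two genuine gaps. First, the assertion that ``only finitely many fibres $K_y$ can have diameter exceeding $\varepsilon$'' does not follow from $\int_\X J_h \le |\Y|$; a bounded planar domain can contain infinitely many pairwise disjoint continua of diameter $\ge \varepsilon$ (think of parallel segments), and nothing in the hypotheses rules out a monotone $\mathscr W^{1,p}$ map collapsing each of them to a point. Second, and more seriously, the sentence ``on $\X \setminus \bigcup_i U_i$ every fibre of $h$ has diameter at most $\varepsilon$, so a piecewise affine correction of size $O(\varepsilon)$ produces a homeomorphism $\tilde h$ there'' is essentially the whole theorem restated. Small fibres do not by themselves yield a nearby homeomorphism: a monotone map whose point preimages are all small continua can still have a dense, topologically intricate set of non-degenerate fibres, and there is no off-the-shelf ``piecewise affine correction'' that untangles this while staying close in $\mathscr W^{1,p}$. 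The actual argument in \cite{IOmono} does not proceed by isolating finitely many bad fibres; it builds the approximating homeomorphisms through a global construction (diffeomorphic smoothing combined with $p$-harmonic type replacements and a careful analysis of the cell structure of level sets), precisely because the bad set cannot be localized in the way you suggest. Your filling step inside each $U_i$ is also left open, as you acknowledge, but the scheme already breaks down before that point.
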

Let us introduce the notation   $\mathscr M^p\,(\overline{\mathbb X} , \overline{\mathbb Y})$  for  the class of orientation preserving monotone mappings $h \colon \overline{\mathbb X} \onto \overline{\mathbb Y}$ in $\W^{1,p} (\X, \C)$.
Our first result guarantees the existence of a minimizer of neohookean energy  among Sobolev monotone deformations.
\begin{theorem}\label{thm:monoexistence}
Let $p\ge 2$ and $q>0$. Then there exists $h_\circ \in \mathscr M^p (\overline{\X}, \overline{\Y})$ such that
\begin{equation} \label{MonotoneMinimizer}\mathsf E_q^p [h_\circ]= \underset{h\in \mathscr M^p (\overline{\X}, \overline{\Y})}{\inf} \mathsf E_q^p [h]  \, . \end{equation}
\end{theorem}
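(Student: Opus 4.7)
The plan is the direct method of the calculus of variations, carried out in the monotone class $\mathscr M^p(\overline\X, \overline\Y)$. Let $m \bydef \inf\{\mathsf E_q^p[h] : h \in \mathscr M^p(\overline\X,\overline\Y)\}$; by the standing non-emptiness hypothesis, $m < \infty$. I pick a minimizing sequence $\{h_j\} \subset \mathscr M^p(\overline\X, \overline\Y)$ with $\mathsf E_q^p[h_j] \le m+1$. Since each $h_j$ maps $\overline\X$ onto $\overline\Y$, the sequence is uniformly bounded, and the energy bound forces $\int_\X \abs{Dh_j}^p \le m+1$. Hence $\{h_j\}$ is bounded in $\W^{1,p}(\X,\C)$ and, after extraction, $h_j \rightharpoonup h_\circ$ weakly.

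The first substantive step is to verify $h_\circ \in \mathscr M^p(\overline\X, \overline\Y)$. For $p \ge 2$, monotone Sobolev mappings in the plane with a common $\W^{1,p}$-bound and common bounded target are equicontinuous — this is the classical oscillation (Courant--Lebesgue) estimate for monotone maps that underlies~\cite{IOmono}. Arzelà--Ascoli upgrades the convergence to uniform on $\overline\X$; in particular the limit $h_\circ$ is continuous. Since a uniform limit of monotone continuous surjections between compact planar Lipschitz domains of the same topological type is again monotone and surjective, we get $h_\circ \colon \overline\X \onto \overline\Y$ monotone, and so $h_\circ \in \mathscr M^p(\overline\X,\overline\Y)$.

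Next I pass to the limit in the energy. Weak lower semicontinuity of the $p$-Dirichlet term is standard. For the Ball-type singular term I would invoke polyconvexity: on the cone $\{\det F > 0\}$, the integrand $(F,t) \mapsto \abs F^p + t^{-q}$ is convex in $F$ and convex in $t$, hence is polyconvex. Once $\det Dh_j \rightharpoonup \det Dh_\circ$ weakly in $L^1(\X)$, Ball's theorem on weak lower semicontinuity of polyconvex integrands yields
\[
\mathsf E_q^p[h_\circ] \le \liminf_{j \to \infty} \mathsf E_q^p[h_j] = m,
\]
and $h_\circ$ realizes the minimum.

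The crux is therefore the weak $L^1$-convergence of the Jacobians. In two dimensions with $p \ge 2$ the divergence form $\det Dh = \partial_1(h^1 \partial_2 h^2) - \partial_2(h^1 \partial_1 h^2)$ already forces $\det Dh_j \to \det Dh_\circ$ in $\mathscr D'(\X)$; together with the uniform bound $\int_\X J_{h_j} \le \abs\Y$ (valid for monotone $\W^{1,p}$-maps onto $\Y$), only equi-integrability of $\{J_{h_j}\}$ remains at stake. For $p>2$ the inequality $2J_h \le \abs{Dh}^2$ gives a uniform $L^{p/2}$-bound and equi-integrability is automatic; the delicate case is the borderline $p=2$, where one must exploit the auxiliary bound $\int_\X J_{h_j}^{-q} \le m+1$ together with monotonicity to rule out simultaneous concentration and collapse of $J_{h_j}$, and finish by Dunford--Pettis. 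This is where I expect the main difficulty: in the absence of prescribed boundary values (the frictionless setting) and without injectivity (we are not in Ball's framework), equi-integrability of the Jacobian must be extracted solely from the interplay between monotonicity and the neohookean term $J^{-q}$.
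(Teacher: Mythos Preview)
Your overall scheme---direct method, weak $\W^{1,p}$ compactness, polyconvex lower semicontinuity, uniform convergence to stay inside $\mathscr M^p$---is exactly the paper's. But you have the difficulty located in the wrong place.

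\textbf{Where you worry needlessly.} The ``crux'' you flag, equi-integrability of $\{J_{h_j}\}$ at $p=2$, is not an obstacle. For mappings bounded in $\W^{1,2}$ with $J_{h_j}\ge 0$, weak $\mathscr L^1_{\loc}$-convergence of the Jacobians is a standard consequence of higher integrability of nonnegative Jacobians (the $J\log J$ estimate); the paper simply quotes this as Lemma~2.5. Moreover, the paper's lower-semicontinuity argument (Lemma~2.6) does not even pass through Ball's theorem: it writes down the pointwise gradient inequalities for $|A|^p$ and $t^{-q}$, regularizes $J_h^{-q}$ to $(\varepsilon+J_h)^{-q}$ to make the test function bounded, and passes to the limit directly. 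No Dunford--Pettis maneuvering is required.

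\textbf{Where the real gap is.} Your equicontinuity claim is false as stated. It is \emph{not} true that a $\W^{1,2}$-bound together with a bounded target yields equicontinuity of monotone maps when the domain is simply connected: the M\"obius maps $h_k(z)=(z+a_k)/(1+a_k z)$ on $\mathbb D$ with $a_k\nearrow 1$ all have Dirichlet energy $2\pi$ yet degenerate to a constant (this is the paper's Remark~2.3). The Courant--Lebesgue argument you invoke needs the target to be at least doubly connected. The paper's substantive contribution here is Lemma~7.1: for $\ell=1$ and $p=2$ one uses the \emph{neohookean} term to rescue equicontinuity. From $\int_\X J_h^{-q}\le E$ and H\"older one gets a uniform lower bound $|h(B)|\ge |B|^{(q+1)/q}E^{-1/q}$ for a fixed ball $B\Subset\X$, which forces each $h_j$ to send some point of $B$ well inside $\Y$; after a conformal normalization (M\"obius in domain and target) one may assume $h_j(0)=0$, and then $h_j\colon\mathbb D\setminus\{0\}\to\mathbb D\setminus\{0\}$ is between doubly connected domains, where the standard modulus-of-continuity estimate applies. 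This is the step your proposal is missing.
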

All the evidence points to the following:
\begin{conjecture}\label{con:main}
Every minimizer $h_\circ \in \mathscr M^p (\overline{\X}, \overline{\Y})$ in (\ref{MonotoneMinimizer})   is a homeomorphism.
\end{conjecture}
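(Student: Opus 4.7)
The plan is a proof by contradiction. Suppose $h_\circ \in \mathscr M^p(\overline{\X}, \overline{\Y})$ is a minimizer that is not a homeomorphism. Monotonicity together with failure of injectivity yields a point $y_\ast \in \overline{\Y}$ whose preimage $K := h_\circ^{-1}(y_\ast)$ is a non-degenerate continuum in $\overline{\X}$. The integrability $\int_\X (\det Dh_\circ)^{-q}\, \dtext x < \infty$ forces $\det Dh_\circ > 0$ almost everywhere, so $K$ has two-dimensional Lebesgue measure zero---were it of positive area, $h_\circ$ would be constant there and $\det Dh_\circ$ would vanish on a set of positive measure, contradicting the finite energy of the minimizer.

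The next step exploits a conformal-moduli mismatch near $K$. Choose a small open topological disk $U \subset \X$ containing $K$, with $\partial U$ a rectifiable Jordan curve on which $h_\circ$ has a well-behaved Sobolev trace, and such that $V := h_\circ(U)$ is a topological disk in $\overline{\Y}$ with $y_\ast$ in its interior. Then $U \setminus K$ is a topological annulus of \emph{finite} conformal modulus, while $V \setminus \{y_\ast\}$ is a punctured topological disk of \emph{infinite} conformal modulus, and $h_\circ|_{U \setminus K}$ is a monotone Sobolev map that covers the latter with degree one. A standard modulus inequality for Sobolev mappings of integrable distortion,
\[ \Mod\bigl(V \setminus \{y_\ast\}\bigr) \ \le \  C \int_{U \setminus K} \frac{\abs{Dh_\circ}^2}{\det Dh_\circ}\, \dtext x , \]
would then force the right-hand side to be infinite.

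To close the contradiction one estimates this outer-distortion integral by H\"older. Writing
\[ \frac{\abs{Dh_\circ}^2}{\det Dh_\circ} \ = \  \frac{\abs{Dh_\circ}^2}{(\det Dh_\circ)^{\lambda}} \cdot (\det Dh_\circ)^{\lambda - 1}, \quad \lambda \in (0,1), \]
and applying a H\"older inequality with exponents tuned to the standing bounds $\int \abs{Dh_\circ}^p < \infty$ and $\int (\det Dh_\circ)^{-q} < \infty$, one should bound the integral by a finite quantity. I expect the \emph{main obstacle} to be verifying this in the full announced range $p \ge 2, \, q > 0$: the H\"older balancing closes cleanly only in a sub-range of $(p, q)$, and for the borderline exponents one likely needs either a sharpened modulus estimate fitted to $(p, q)$-integrability (rather than the classical $K_I \in L^1$ hypothesis), or an iterated argument on nested annular shells shrinking toward $K$.

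As a fallback, should the direct energy estimate fail for part of the parameter range, one would attempt a competitor construction: replace $h_\circ|_U$ with a monotone extension of the boundary trace $h_\circ|_{\partial U}$ into $V$ that opens the collapsed continuum, paste it with $h_\circ$ on $\X \setminus U$, and try to show that the resulting $\tilde h \in \mathscr M^p(\overline{\X}, \overline{\Y})$ has strictly smaller neohookean energy. This reduces the conjecture to the disk-to-disk case but, absent any convexity of $\mathsf E_q^p$ in the monotone class, the strict decrease appears essentially as hard as the original statement; thus the two routes are complementary and a successful proof will probably combine the modulus/H\"older estimate above with an inductive scheme over the complexity of the branch set.
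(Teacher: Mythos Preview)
The statement you are attempting to prove is labeled a \emph{Conjecture} in the paper and is not proven there; the paper establishes it only in the sub-range $q \ge \frac{p}{p-2}$ (Theorem~\ref{thm:monodiscrete}) and explicitly leaves the general case open. So there is no ``paper's own proof'' to compare against.

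Your modulus/H\"older route is essentially the same mechanism the paper uses for that partial result: bounding the distortion $\mathscr K_h = \abs{Dh}^2/(2J_h)$ in $\mathscr L^1_{\loc}$ via Young/H\"older from $\abs{Dh}^p \in \mathscr L^1$ and $J_h^{-q} \in \mathscr L^1$, then invoking the Iwaniec--\v Sver\'ak theorem that a $\mathscr W^{1,2}$ map with integrable distortion is open and discrete, hence (by monotonicity) injective. The H\"older balancing you allude to closes exactly when $\frac{2}{p} + \frac{1}{q} \le 1$, i.e.\ $q \ge \frac{p}{p-2}$, and no sharper.

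The genuine gap is this: your first argument uses only finite energy, never minimality. But the paper's Example~\ref{ex:nonhomeo} exhibits, for every $0 < q < \frac{p}{p-2}$, a non-injective $h \in \mathscr M^p(\overline{\X},\overline{\Y})$ with $\mathsf E_q^p[h] < \infty$ that collapses a segment to a point. For such $h$ your $U,K,V$ can be set up exactly as you describe, yet $h$ exists; hence the modulus inequality you invoke must simply produce $+\infty$ on the right-hand side in this range. No ``sharpened modulus estimate fitted to $(p,q)$-integrability'' can rescue this, because the counterexample has precisely that integrability. Any proof of the conjecture in the remaining range \emph{must} use that $h_\circ$ is a minimizer, not merely a finite-energy monotone map.

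That leaves your fallback competitor construction as the only viable route, and you correctly identify that establishing strict energy decrease is the whole difficulty. The paper offers no progress on this beyond the inner-variational equation (Theorem~\ref{thm:hopfhomeo}), which again requires extra regularity to conclude. In short: your proposal rediscovers the known partial result and correctly locates the obstruction, but does not advance on the open part.
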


For example, this conjecture is confirmed when $\X$ and $\Y$ are circular annuli, see~\cite{IOne}. Also, the conjecture is valid if $\X=\Y$, in which case it is  relatively easy to see that the identity map minimizes the energy, see Example~\ref{ex:id}. However, it is not known whether  the identity map minimizes the neohookean energy $\mathsf E_q^p$ when $p<2$  (compressible neohookean materials).  It is worth noting that this is not the case for the $p$-harmonic energy, see~\eqref{eq:fbn=2p>1}.

We give an affirmative answer to these questions for  neohookean materials whose associated energy integrand grows sufficiently fast. Precisely, we have

\begin{theorem}\label{thm:homeexistence}
Let $p>2$ and $q>1$ such that $\frac{2}{p}+\frac{1}{q} \le 1$. Then there exists a homeomorphism  $h_\circ \in \mathscr H^p ({\X}, {\Y})$ such that
\[ \mathsf E_q^p [h_\circ]= \underset{h\in \mathscr M^p (\overline{\X}, \overline{\Y})}{\inf} \mathsf E_q^p [h]  =  \underset{h\in \mathscr H^p ({\X}, {\Y})}{\inf} \mathsf E_q^p [h]\, . \]
\end{theorem}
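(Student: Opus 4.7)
The plan is two-staged: first invoke Theorem~\ref{thm:monoexistence} to produce a monotone minimizer $h_\circ \in \mathscr M^p(\overline{\X}, \overline{\Y})$ with $\mathsf E_q^p[h_\circ] = \inf_{\mathscr M^p(\overline{\X},\overline{\Y})} \mathsf E_q^p$, and then exploit the strengthened integrability hypotheses on $p$ and $q$ to show that this $h_\circ$ is automatically a homeomorphism. Once injectivity is in hand, the inclusion $\mathscr H^p(\X,\Y) \subset \mathscr M^p(\overline{\X},\overline{\Y})$ supplies one inequality between the two infima, while $h_\circ \in \mathscr H^p(\X,\Y)$ combined with $\mathsf E_q^p[h_\circ] = \inf_{\mathscr M^p} \mathsf E_q^p$ supplies the reverse; all three quantities in the statement then coincide and $h_\circ$ itself is the required minimizer.

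Finiteness of $\mathsf E_q^p[h_\circ]$ forces $|Dh_\circ|^p \in L^1(\X)$ and $J_{h_\circ}^{-q} \in L^1(\X)$. In particular $J_{h_\circ}>0$ almost everywhere, so that $h_\circ$ is a mapping of finite distortion with outer distortion function $K_{h_\circ}(x) = |Dh_\circ(x)|^2 / J_{h_\circ}(x)$. The numerical role of the assumption $\frac{2}{p}+\frac{1}{q}\le 1$ is to push $K_{h_\circ}$ into $L^1$: by Hölder's inequality applied with exponents $p/(2r)$ and $q/r$, where $r = (2/p+1/q)^{-1}\ge 1$,
\[ \int_\X K_{h_\circ}^r\, \dtext x \;\le\; \Bigl( \int_\X |Dh_\circ|^p\, \dtext x\Bigr)^{2r/p}\Bigl( \int_\X J_{h_\circ}^{-q}\, \dtext x\Bigr)^{r/q}\;<\;\infty . \]
Hence $h_\circ$ is a planar monotone Sobolev mapping of integrable distortion.

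For injectivity, note that $p>2$ makes $h_\circ$ Hölder continuous by the Sobolev embedding, and monotonicity together with $h_\circ \colon \overline{\X}\onto\overline{\Y}$ forces $h_\circ$ to be non-constant. The openness-and-discreteness theory of planar mappings of integrable distortion then applies to $h_\circ|_{\X}$ and shows that it is open and discrete. The finishing move is essentially soft: for any $y \in \overline{\Y}$ the fibre $h_\circ^{-1}(y)$ is simultaneously a continuum (by monotonicity of $h_\circ$ on $\overline{\X}$) and a discrete subset of $\overline{\X}$ (by discreteness), hence a singleton. Therefore $h_\circ$ is a continuous bijection between the compact Hausdorff spaces $\overline{\X}$ and $\overline{\Y}$, i.e.\ a homeomorphism, and in particular $h_\circ \in \mathscr H^p(\X,\Y)$.

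I expect the principal difficulty to lie in the injectivity step: one has to verify that the monotone minimizer produced abstractly by Theorem~\ref{thm:monoexistence} fits the precise regularity hypotheses of the openness-and-discreteness theorem for planar mappings of finite distortion (for which the threshold $K\in L^1$ is the natural one in two dimensions, matched exactly by $\frac{2}{p}+\frac{1}{q}\le 1$), and one has to be careful that the \emph{continuum meets discrete set} argument passes uniformly to the compact boundary $\partial \X$. Granting this, the identification of the three infima in the statement is a purely formal consequence of $h_\circ$ being at once a monotone minimizer and a genuine homeomorphism.
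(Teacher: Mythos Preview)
Your proposal is correct and follows essentially the same route as the paper: obtain a monotone minimizer via Theorem~\ref{thm:monoexistence}, show its distortion is integrable, invoke the Iwaniec--\v{S}ver\'ak theorem to get openness and discreteness, and combine discreteness with monotonicity to conclude injectivity. The only cosmetic difference is that the paper bounds $\mathscr K_{h_\circ}$ pointwise by the energy integrand via Young's inequality $ABC \le \alpha A^{1/\alpha} + \beta B^{1/\beta} + \gamma C^{1/\gamma}$ with $(\alpha,\beta,\gamma)=(\tfrac{2}{p},\tfrac{1}{q},1-\tfrac{2}{p}-\tfrac{1}{q})$, whereas you use H\"older to put $K_{h_\circ}$ into $L^r$ with $r=(\tfrac{2}{p}+\tfrac{1}{q})^{-1}\ge 1$; either estimate suffices.

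One point to tighten: your fibre argument ``continuum meets discrete set, hence singleton'' is only valid for $y$ in the \emph{interior} $\Y$, since discreteness is established on $\X$, not on $\overline{\X}$. The paper handles this by first noting that a monotone surjection $\overline{\X}\onto\overline{\Y}$ satisfies $h(\partial\X)=\partial\Y$, so that for $y\in\Y$ the continuum $h^{-1}(y)$ lies entirely in $\X$ and your argument goes through; one then checks separately that $h(\X)=\Y$. This yields $h_\circ\colon\X\onto\Y$ a homeomorphism, which is exactly what membership in $\mathscr H^p(\X,\Y)$ requires---bijectivity on the closed boundaries is neither claimed nor needed.
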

The existence of  monotone  minimizer $h_\circ$ is ensured by  Theorem~\ref{thm:monoexistence}, and the fact that $h_\circ$ is a homeomorphism follows from the next result.
\begin{theorem}\label{thm:monodiscrete}
Let $p>2$ and $q \geqslant \frac{p}{p-2}$. If $h \in  \mathscr M^p (\overline{\X}, \overline{\Y})$ and 
\begin{equation}\label{LocallyFiniteEnergy}
\int_{\Omega} \left(|Dh|^p + \frac{1}{J_h^q} \right) \dtext x \,<\,\infty \,, \,\,  \textnormal{\textit{for every compact subset}}  \; \Omega \Subset\mathbb X\, ,\end{equation}
then $h\colon \X \onto \Y$ is a homeomorphism.
\end{theorem}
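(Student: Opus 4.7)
The plan is to show that $h$ is a mapping of finite distortion whose distortion function is locally integrable on $\X$, invoke the classical theorem of Iwaniec and \v{S}ver\'ak that such planar mappings are discrete and open, and then combine discreteness with monotonicity to obtain injectivity.

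First, I verify that the distortion function $K_h(x) \bydef |Dh(x)|^2/J_h(x)$ (set to $1$ where $J_h = 0$) lies in $L^1_{\loc}(\X)$. The hypothesis $q \ge p/(p-2)$ is equivalent to $\frac{2}{p}+\frac{1}{q}\le 1$, so H\"older's inequality with conjugate exponents $p/2$ and $p/(p-2)$ yields, for every $\Omega \Subset \X$,
\[ \int_\Omega K_h \, \dtext x \le \Bigl( \int_\Omega |Dh|^p \, \dtext x \Bigr)^{2/p} \Bigl( \int_\Omega J_h^{-p/(p-2)} \, \dtext x \Bigr)^{(p-2)/p}. \]
Since $p/(p-2) \le q$, the function $J_h^{-p/(p-2)}$ is dominated by $1+J_h^{-q}$, so the right-hand side is finite by the local energy hypothesis \eqref{LocallyFiniteEnergy}. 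As $\{J_h=0\}$ has measure zero, $h$ is a mapping of finite distortion in the standard sense.

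Second, since $p > 2$ we have $h \in \W^{1,2}_{\loc}(\X, \R^2)$; the map $h$ is sense-preserving ($J_h \ge 0$ a.e.) and nonconstant (being surjective onto $\Y$). I then invoke the theorem of Iwaniec and \v{S}ver\'ak asserting that a planar nonconstant mapping of finite distortion in $\W^{1,2}_{\loc}$ with $K_h\in L^1_{\loc}$ is continuous, discrete and open. Hence $h|_\X$ is both discrete and open.

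Third, I combine discreteness with monotonicity. For $y \in \Y$, monotonicity tells us that $h^{-1}(y) \subset \overline{\X}$ is a continuum, while discreteness of $h|_\X$ forces every point of $h^{-1}(y)\cap \X$ to be an isolated point of $h^{-1}(y)$; a continuum with an isolated point must be a singleton. Therefore either $h^{-1}(y)$ is a single point of $\X$, or $h^{-1}(y) \subset \partial \X$. Openness of $h|_\X$ makes $h(\X)$ an open subset of $\R^2$ contained in $\overline{\Y}$, thus in $\Y$; standard topological arguments for monotone surjections between planar Lipschitz domains then yield $h(\partial \X) \subseteq \partial \Y$ and $h(\X) = \Y$. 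Combined with the injectivity just established, this makes $h|_\X\colon \X \onto \Y$ a continuous open bijection, hence a homeomorphism.

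The principal obstacle is the application of Iwaniec--\v{S}ver\'ak at the borderline integrability $K_h \in L^1_{\loc}$: the hypothesis $q \ge p/(p-2)$ is exactly the sharp H\"older exponent placing $K_h$ in $L^1$, and the $L^1$ endpoint of the discrete-and-open theory for planar mappings of finite distortion is its most delicate case --- this is precisely where the critical threshold $p/(p-2)$ in the statement originates.
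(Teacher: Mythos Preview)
Your proof is correct and follows essentially the same route as the paper: bound the distortion $|Dh|^2/J_h$ in $L^1_{\loc}$, invoke Iwaniec--\v{S}ver\'ak to get discreteness and openness, then combine with monotonicity to conclude injectivity and $h(\X)=\Y$. The only difference is that you obtain the $L^1_{\loc}$ bound on the distortion via H\"older's inequality, whereas the paper uses a three-term Young inequality to get the pointwise estimate $\mathscr{K}_h \le |Dh|^p + J_h^{-q}$; both arguments hinge on exactly the same exponent relation $2/p + 1/q \le 1$ and are equivalent in effect.
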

\begin{remark}\label{rem:16}
Theorem \ref{thm:monodiscrete} also holds for $p=2$ and $q= \infty$, in which case the locally finite energy condition (\ref{LocallyFiniteEnergy}) should be stated as
\[ \int_{\mathbb X} |Dh|^2 \, \dtext x \, < \, \infty \,\, \,\textnormal{ and } \,\, \, \frac{1}{\det Dh} \in \mathscr L^{\infty}(\mathbb X). \]
That is,  $\det Dh(x) \geqslant \frac{1}{C} >0$ with a constant $C= \| J_h^{-1}\|_{\mathscr L^{\infty}(\mathbb X)}$.
\end{remark}

Theorem~\ref{thm:monodiscrete} is sharp, namely it fails if  $0< q< \frac{p}{p-2}$, as the following example shows.
 \begin{example}\label{ex:nonhomeo}
 For $0< q< \frac{p}{p-2}$ there exists a non-injective $h \in  \mathscr M^p (\overline{\X}, \overline{\Y})$ with $ \mathsf E_q^p [h] <\infty$.
\end{example}

This example raises a question about  partial injectivity  of  $h \in  \mathscr M^p (\overline{\X}, \overline{\Y})$ with $ \mathsf E_q^p [h] <\infty$ when  $0<q< \frac{p}{p-2}$. First, we have, 

\begin{theorem}\label{thm:partialinj}
Suppose that a monotone map $h \colon \overline{\X} \onto \overline{\Y}$ of Sobolev class $\W^{1,2} (\X, \mathbb C)$ has positive Jacobian determinant. Then
\begin{itemize}
\item $h$ is globally invertible in the sense of (\ref{eq:ballinjae})
\item In addition, there exists $\X_h $ of full measure in $\X$ such that $h$ restricted to $\X_h$ is injective.  
\end{itemize}
\end{theorem}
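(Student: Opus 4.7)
The plan combines the monotone structure of the fibers of $h$ with a volume and area-formula calculation, invoking Theorem~\ref{thmmono} to approximate $h$ by genuine homeomorphisms.

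Since $h$ is monotone and of class $\W^{1,2}$ on a planar domain, the standard oscillation estimate for monotone Sobolev functions yields continuity, so for each $y \in \overline{\Y}$ the fiber $K_y := h^{-1}(y)$ is a continuum in $\overline{\X}$; being a continuum in $\R^2$, it is either a single point or an uncountable set. By Theorem~\ref{thmmono} there are homeomorphisms $h_j \colon \X \onto \Y$ with $h_j \to h$ in the norm of $\W^{1,2}$. Strong convergence of $Dh_j$ in $L^2$ gives $J_{h_j} \to J_h$ in $L^1(\X)$, so the classical change-of-variables identity $\int_\X J_{h_j}\, \dtext x = \abs{\Y}$ passes to the limit to yield
\[
\int_\X J_h(x)\, \dtext x \;=\; \abs{\Y}.
\]

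Let $D \subset \X$ denote the full-measure set of approximate differentiability points of $h$. Federer's area formula for approximately differentiable maps gives $\int_\X J_h\, \dtext x = \int_{\overline{\Y}} N(y)\, \dtext y$, where $N(y) := \#(K_y \cap D)$. Combined with the volume identity this yields $\int_{\overline{\Y}} N\, \dtext y = \abs{\Y}$. On the other hand, because $J_h > 0$ a.e., the restriction $h|_D$ enjoys Lusin's condition~(N) and its image covers $\Y$ up to a null set; hence $N(y) \ge 1$ for a.e.\ $y \in \Y$. The two constraints together force $N(y) = 1$ for a.e.\ $y$: there is a unique $x_y \in D$ with $h(x_y) = y$ and $J_h(x_y) > 0$. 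Approximate differentiability of $h$ at $x_y$ with invertible linear part makes $h$ approximately injective on a density-one subset of any small ball around $x_y$; combined with the connectedness of the continuum $K_y$, this rules out any accumulation of additional preimages at $x_y$ through the null exceptional set $\X \setminus D$, and so $K_y = \{x_y\}$ for a.e.\ $y \in \Y$. This is exactly the full-measure assertion about $\Y_h$ in~\eqref{eq:ballinjae}, the first bullet.

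The second bullet follows by a short area-formula argument. Set $B := \overline{\Y} \setminus \Y_h$, so $\abs{B} = 0$, and $\X_h := \X \setminus h^{-1}(B)$. If $\abs{h^{-1}(B)} > 0$, then $\int_{h^{-1}(B)} J_h\, \dtext x > 0$ because $J_h > 0$ a.e., whereas the area formula identifies this integral with $\int_B N(y)\, \dtext y = 0$, a contradiction. Hence $\abs{h^{-1}(B)} = 0$, the set $\X_h$ has full measure in $\X$, and on $\X_h$ each point is the unique element of its fiber, so $h|_{\X_h}$ is injective. The most delicate ingredient is the passage from $N(y) = 1$ (one regular preimage) to $K_y = \{x_y\}$ (no further preimages at all): the continuum $K_y$ could a priori trail into the null set $\X \setminus D$, and ruling out this pathology rests on the interplay between fiber-connectedness (monotonicity) and approximate invertibility of $Dh$ at the unique regular preimage.
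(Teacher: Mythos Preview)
Your overall structure matches the paper's, and the volume identity $\int_\X J_h = |\Y|$ via strong $\W^{1,2}$-approximation by homeomorphisms is fine (indeed a bit cleaner than the paper's appeal to weak $L^1$-compactness of Jacobians). The gap is precisely the step you flag as ``most delicate'': passing from $N(y)=1$, where $N(y)=\#(K_y\cap D)$, to $K_y=\{x_y\}$. Approximate differentiability of $h$ at $x_y$ with invertible linear part only yields $h(x)\neq y$ for $x\neq x_y$ in a density-one set $E$ near $x_y$; it says nothing about $K_y \cap E^c$. A nondegenerate continuum has zero two-dimensional measure and can perfectly well sit inside the density-zero complement $E^c$ (and inside the null set $\X\setminus D$), so neither approximate injectivity at $x_y$ nor connectedness of $K_y$ rules this out. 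The argument you sketch does not close.

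The paper avoids this difficulty entirely by using a change-of-variables formula in which the multiplicity $N_h(y,\X)$ counts \emph{all} preimages $\{x\in\X: h(x)=y\}$, not just those in $D$. The key input is Lusin's condition~(N) for $h$ on the whole of $\X$: since $h\in\W^{1,2}$ with $J_h>0$ a.e., $h$ is monotone in the sense of Lebesgue, and such maps satisfy~(N). With (N) in hand, $|\X\setminus D|=0$ gives $|h(\X\setminus D)|=0$, so for a.e.\ $y$ there are no preimages outside $D$; equivalently, the area formula with full multiplicity yields directly $\int_\Y N_h(y,\X)\,\dtext y = |\Y|$ and hence $N_h(y,\X)=1$ a.e. Your Step~2 is then exactly the paper's: $\int_{\X\setminus h^{-1}(\Y_h)} J_h = 0$ together with $J_h>0$ a.e.\ forces $|\X\setminus h^{-1}(\Y_h)|=0$. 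So the fix is simply to invoke Lusin~(N) for $h$ itself (not just for $h|_D$) and drop the approximate-injectivity paragraph.
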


Next in lines is the study of the {\it branch set}
\[\mathcal B_h \bydef \{x \in \X \colon  h \textnormal{ fails to be homeomorphic near } x\}  \]
and its image $h(\mathcal B_h)$.  Recall that  for the Dirichlet  energy the branch set of the energy-minimal mapping $h\in \mathscr M^{2} (\overline{\mathbb X} , \overline{\mathbb Y}) $ may have a positive area, whereas $h (\mathcal B_{h}) \subset \partial \Y$   (actually a nonconvex part of $\partial \Y$)~\cite{CIKO, IOan}. We show, however, that  under the assumptions  of Example~\ref{ex:nonhomeo}  both the branch set and also the image of the branch set may have a positive area. Recall that if $q\ge  \frac{p}{p-2}>0$, then any monotone map $h$ with  $ \mathsf E_q^p [h] <\infty$ is injective  by Theorem~\ref{thm:homeexistence}.
\begin{example}\label{ex:branch}
If $0< q< \frac{p}{p-2}$, then there exists $h \in  \mathscr M^p (\overline{\X}, \overline{\Y})$ with $ \mathsf E_q^p [h] <\infty$ such that $\abs{\mathcal B_h}>0$ and $\abs{h(\mathcal B_h)}>0$.
\end{example}
Our example is based on a Cantor type construction, see Section~\ref{Construction} for the construction.

Returning to Conjecture~\ref{con:main},  with the aid of the complex partial derivatives, $h_z= \frac{\partial h}{\partial z}$ and $h_{\bar z}= \frac{\partial h}{\partial \bar z}$, we express the energy as
\begin{equation}\label{eq:neop=2}
\mathsf E^2_1[h] = \int_\X \left[2 \big(\abs{h_z}^2+ \abs{h_{\bar z}}^2 \big) + \big(\abs{h_z}^2- \abs{h_{\bar z}}^2 \big)^{-1} \right]\, \dtext z \, .
\end{equation}
Clearly, one cannot perform outer variations $h_\varepsilon =h+\,\varepsilon \,\eta$, with  $\eta \in \mathscr C^\infty_\circ (\X)$ as they live out the class of monotone Sobolev mappings $\mathscr M^p (\overline{\X}, \overline{\Y})$. Thus we loss the Euler--Lagrange equation, which is the major source of difficulty here. Such a difficulty is widely recognized in the theory of Nonlinear Elasticity.  This forces us to  rely on the inner variation of the independent variable $z_\varepsilon = z + \varepsilon \tau (z)$, where $\tau \in \mathscr C_\circ^{\infty}(\mathbb X, \mathbb R^2)$. The inner variational equation for the minimizer takes the form
\begin{equation}\label{eq:innerp}
\frac{\partial}{\partial {z}} \left[ \left( 1-\frac{p}{2}\right)|Dh|^p + \frac{1+q}{J_h^q}\right] \,= \, 2p\, \frac{\partial}{\partial \overline{z}} \left[|Dh|^{p-2} \overline{h_z}h_{\overline{z}}\right]
\end{equation}
see formula (2.6), page 648 in \cite{IKOre}. 

Here, the complex partial derivatives $\frac{\partial}{\partial z}$ and $\frac{\partial}{\partial \bar z}$ are understood in the sense of distributions.  For $p=2$ and $q=1$, this simplifies as,
\begin{equation}\label{eq:inner}
2 \frac{\partial}{\partial \bar z} \left[ h_z \overline{h_{\bar z}}\right] =  \frac{\partial}{\partial  z} \left[ \frac{1}{\det Dh}\right]\, ,   \qquad \textnormal{with } \;\mathsf E^2_1[h]< \infty \, .
\end{equation}
Let us name~\eqref{eq:innerp} and (\ref{eq:inner}) {\it neohookean Hopf systems}.

It is worth noting that monotone Lipschitz solutions to the neohookean Hopf system~\eqref{eq:inner} are homeomorphisms.  In this connection we recall that for  the Dirichlet energy the inner-stationary solutions  are  always Lipschitz continuous, see~\cite{IKOre}.  Actually,  a solution of~\eqref{eq:inner} in $\mathscr M^4 (\overline{\X}, \overline{\Y})$ will turn out to be a homeomorphism. This follows from the next result,  simply by taking $p=2$ and $q=1$.
\begin{theorem}\label{thm:hopfhomeo}
 Consider a monotone mapping $h \colon \overline{\mathbb X} \onto \overline{\mathbb Y}$ of finite neohookean energy:
 \[ \mathsf E_q^p[h]  =  \int_{\mathbb X} \left( |Dh|^p + J_h^{-q}\right) \dtext x, \quad  p>1 \quad  \textnormal{and} \quad  q> 0 \,  . \]
 Assume that $h \in \mathscr W^{1,s}_{\loc} (\mathbb X, \mathbb R^2)$, for some $s \geqslant  \frac{p}{q}+2$ and $s>p$,  satisfies the equation ~\eqref{eq:innerp}. Then $h$ is a homeomorphism of $\mathbb X$ onto $\mathbb Y$. 
 \end{theorem}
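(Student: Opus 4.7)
The plan is to exploit the inner variational equation~\eqref{eq:innerp} as a bootstrap mechanism for the integrability of $J_h^{-q}$, and then to invoke Theorem~\ref{thm:monodiscrete} at the enhanced exponent pair $(s, qs/p)$. Introduce the auxiliary quantities
\[ v := \bigl(1-\tfrac{p}{2}\bigr)\,|Dh|^p + (1+q)\,J_h^{-q}, \qquad \Psi := |Dh|^{p-2}\,\overline{h_z}\,h_{\bar z},\]
so that~\eqref{eq:innerp} becomes the distributional identity $\partial_z v = 2p\,\partial_{\bar z}\Psi$ on $\X$. Finiteness of the neohookean energy $\mathsf E_q^p[h]$ places $v$ in $\mathscr L^1(\X)$, whereas $h \in \W^{1,s}_{\loc}(\X,\R^2)$ with $s > p$ yields $|Dh|^p,\,\Psi \in \mathscr L^{s/p}_{\loc}(\X)$ with integrability exponent $s/p > 1$ (the pointwise bound $|\Psi| \leqslant \tfrac12 |Dh|^p$ is immediate).

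The analytic heart is a Calder\'on--Zygmund bootstrap. Applying $\partial_{\bar z}$ to $\partial_z v = 2p\,\partial_{\bar z}\Psi$ produces $\Delta v = 8p\,\partial_{\bar z}^2\Psi$. Fix a ball $B \Subset \X$, choose a cutoff $\chi \in \mathscr C^\infty_c(\X)$ equal to $1$ on $B$, and set $\tilde \Psi := \chi \Psi \in \mathscr L^{s/p}(\C)$. Let $w := 8p\, N * \partial_{\bar z}^2 \tilde \Psi$ with $N$ the Newtonian potential; the map $\tilde \Psi \mapsto w$ is a second-order Calder\'on--Zygmund singular integral (a variant of the iterated Beurling--Ahlfors transform), bounded on $\mathscr L^r(\C)$ for $1 < r < \infty$, so $w \in \mathscr L^{s/p}(\R^2)$. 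On $B$, $\Delta(v - w) = 0$ and $v - w \in \mathscr L^1(B)$; Weyl's lemma then delivers that $v - w$ is harmonic and smooth on $B$. Consequently $v \in \mathscr L^{s/p}_{\loc}(\X)$, and subtracting off the term $(1-\tfrac{p}{2})|Dh|^p \in \mathscr L^{s/p}_{\loc}(\X)$ from the defining relation produces
\[ J_h^{-q} \in \mathscr L^{s/p}_{\loc}(\X), \qquad \textnormal{equivalently,} \qquad J_h^{-qs/p} \in \mathscr L^{1}_{\loc}(\X).\]

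Now apply Theorem~\ref{thm:monodiscrete} to $h$ with the enhanced exponent pair $(p',q') := (s, qs/p)$. The threshold inequality $q' \geqslant p'/(p'-2)$ is algebraically equivalent to $s \geqslant p/q + 2$, the standing hypothesis; the same inequality ensures $p' = s > 2$. Since $h$ is monotone, lies in $\W^{1,s}_{\loc}(\X,\R^2)$, and satisfies $J_h^{-qs/p} \in \mathscr L^{1}_{\loc}(\X)$, the locally finite energy condition~\eqref{LocallyFiniteEnergy} for $(p',q')$ is met, and Theorem~\ref{thm:monodiscrete} concludes that $h\colon \X \onto \Y$ is a homeomorphism. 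The principal obstacle is executing the Calder\'on--Zygmund bootstrap rigorously within the weak distributional setup of~\eqref{eq:innerp}, in particular handling the cutoff-error terms generated by localization and certifying that the harmonic correction extracted via Weyl's lemma does not degrade the $\mathscr L^{s/p}$ control; once this is done, the algebraic matching of $(s, qs/p)$ to the hypotheses of Theorem~\ref{thm:monodiscrete} is a clean verification.
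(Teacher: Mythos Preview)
Your proposal is correct and follows the same overall strategy as the paper: bootstrap $J_h^{-q}$ from $\mathscr L^1$ into $\mathscr L^{s/p}_{\loc}$ via the inner variational equation and Calder\'on--Zygmund theory, then invoke Theorem~\ref{thm:monodiscrete} with the enhanced pair $(s,\,qs/p)$. The only difference is in how the elliptic bootstrap is packaged: the paper stays at first order, forming $\lambda\Phi - S(\lambda\Psi)$ with the Beurling--Ahlfors transform $S$ and using Weyl's lemma for holomorphic functions, whereas you apply $\partial_{\bar z}$ once more to obtain $\Delta v = 8p\,\partial_{\bar z}^2\Psi$ and use Weyl's lemma for harmonic functions after subtracting the Newtonian potential solution. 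These are equivalent manoeuvres---indeed, your operator $\tilde\Psi \mapsto \partial_{\bar z}^2(N*\tilde\Psi)$ is, up to a constant, the Beurling--Ahlfors transform itself---so neither route is more elementary or more general; the paper's version is marginally more direct in that it avoids the extra differentiation.
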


\section{Preliminaries}
\subsection{Monotone in the sense of Lebesgue}
There are several notions commonly known in literature as monotonicity. To avoid confusion we use the term {monotone in the sense of Lebesgue} for one of these.  This notion goes back to
H. Lebesgue~\cite{Le} in 1907.

\begin{definition} \label{def:monoleb}
Let $\X$ be an open subset of $\R^2$. A continuous mapping $h\colon {\X} \to \R^2$ is  \emph{monotone in the sense of Lebesgue} if for every open set $\Omega \subset {\X}$ we have
\begin{equation}\label{diamdef}
\diam h(\overline{\Omega}) = \diam h(\partial \Omega).
\end{equation}
Note that for a real-valued function ~\eqref{diamdef} can be stated as
\begin{equation}\label{a}
\min_{\overline{\Omega}}h=\min_{\partial \Omega} h   \qquad   \textnormal{(minimum principle)}
\end{equation}
\begin{equation}\label{b}
 \max_{\overline{\Omega}} h = \max_{\partial \Omega}h \qquad   \textnormal{(maximum principle)}.
\end{equation}
\end{definition}

\subsection{Modulus of continuity and conformal energy}
In the next lemma, $\X $ and $ \Y $ are $\ell$-connected Lipschitz domains in $\R^2$, see~\cite[Lemma 4.3]{IOjems}
\begin{lemma}\label{lem:modofcontdirc}
To every pair $(\X, \Y)$ of $\ell$-connected bounded Lipschirtz domains, $\ell \ge 2$, there corresponds a constant $C=C(\X, \Y)$ such that for $h \in \mathscr H^2 (\X, \Y)$ and $\ell \ge 2$ we have
\begin{equation}\label{eq:modofcontdirc}
\abs{h(x_1)-h(x_2)}^2 \le \frac{C\cdot  \int_\X \abs{Dh}^2}{\log \left( 1+ \frac{\diam \X}{\abs{x_1-x_2}}\right)} \, , \qquad x_1,x_2 \in \overline{\X}
\end{equation}
whenever $h \in  \mathscr H^2 (\X, \Y)$ and $x_1 \not= x_2$ in $\overline{\X}$.
\end{lemma}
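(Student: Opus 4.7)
The plan is to run the classical Courant--Lebesgue length--area argument in polar coordinates around $x_1$, reducing the pointwise estimate to a bound on the length of $h$ along a well-chosen circle. Fix $x_1, x_2 \in \overline\X$, set $d := |x_1 - x_2|$ and $R := d + \diam \X$. For a.e.\ $\rho > 0$ the restriction $h|_{S_\rho \cap \X}$ is absolutely continuous (by the ACL property along circles coming from Fubini in polar coordinates), where $S_\rho := \partial B(x_1, \rho)$. On each arc component Cauchy--Schwarz yields
\begin{equation*}
\mathrm{length}\bigl(h(S_\rho \cap \X)\bigr)^2 \le 2\pi \rho \int_{S_\rho \cap \X} |Dh|^2 \, \dtext \mathcal H^1.
\end{equation*}
Dividing by $\rho$, integrating over $(d, R)$, and applying Fubini gives
\begin{equation*}
\int_d^R \frac{\mathrm{length}(h(S_\rho \cap \X))^2}{\rho} \, \dtext \rho \le 2\pi \int_\X |Dh|^2,
\end{equation*}
so a mean-value argument produces a \emph{good radius} $\rho^* \in (d, R)$ with $\mathrm{length}(h(S_{\rho^*} \cap \X))^2 \le 2\pi \int_\X |Dh|^2 / \log(R/d)$.

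Next I convert this circle-length bound into a pointwise oscillation estimate via monotonicity. Since $h$ is an orientation-preserving homeomorphism it is monotone in the sense of Lebesgue (Definition~\ref{def:monoleb}), so for $\Omega := B(x_1, \rho^*) \cap \X$ one has $|h(x_1) - h(x_2)| \le \diam h(\overline{\Omega}) = \diam h(\partial \Omega)$ (noting that both $x_1$ and $x_2$ belong to $\overline{\Omega}$ because $\X$ is Lipschitz, hence locally connected up to $\partial \X$). When the full disk $B(x_1, \rho^*)$ lies in $\X$, $h(S_{\rho^*})$ is a Jordan curve and the elementary inequality $\diam(J) \le \mathrm{length}(J)/2$ for closed curves gives $|h(x_1) - h(x_2)|^2 \le \mathrm{length}(h(S_{\rho^*}))^2/4$. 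Combined with the good-radius estimate and the choice $R/d = 1 + \diam \X / d$, this yields the lemma in the strictly interior case.

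To cover points near $\partial \X$, $\partial \Omega$ picks up the boundary arc $\partial \X \cap \overline{B(x_1, \rho^*)}$ and the image $h(\partial \X \cap \overline{B(x_1, \rho^*)})$ must also be controlled. I would proceed by a Lipschitz flattening: cover $\partial \X$ by finitely many bi-Lipschitz charts in which $\partial \X$ becomes a line segment, and use the continuous boundary extension of $h$ guaranteed for $p = 2$ by~\cite{IObound} to reflect $h$ across each flat boundary piece. This produces a Sobolev extension $\tilde h \in \mathscr W^{1,2}(\tilde \X, \R^2)$ on an enlarged domain $\tilde \X \supset B(x_1, R)$ with $\int_{\tilde \X} |D\tilde h|^2 \le C_0(\X) \int_\X |Dh|^2$. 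In $\tilde \X$ the circles $S_\rho$ are now genuine, the argument of the first two paragraphs applies to $\tilde h$, and the pointwise bound on $h$ follows with constant $C = C(\X, \Y)$ depending only on the pair.

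The main obstacle is the boundary step: producing the reflected extension with energy bounded by a multiplicative constant depending only on $(\X, \Y)$, and verifying that the reflected map retains the regularity (monotonicity or at least the Jordan-region diameter bound) needed to close the argument. The hypothesis that $\X$ and $\Y$ are $\ell$-connected Lipschitz with $\ell \ge 2$ enters precisely here: it ensures that $\partial \X$ consists of finitely many closed Lipschitz curves on which the reflection charts can be set up unambiguously, and that $\Y$ has nontrivial conformal type so that the reflection of $h$ respects the topology of the target.
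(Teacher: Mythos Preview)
The paper does not supply a proof; it cites \cite[Lemma~4.3]{IOjems}. Your interior Courant--Lebesgue argument (the first two paragraphs) is standard and correct.

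The boundary step contains a genuine gap. Your reflection scheme makes no essential use of the hypothesis $\ell \geqslant 2$: Lipschitz flattening and $\mathscr W^{1,2}$-reflection across a boundary chart work equally well when $\X$ is simply connected. Were your argument to close, it would establish \eqref{eq:modofcontdirc} for $\ell = 1$ as well, in direct contradiction to Remark~\ref{rem:modofcontdirc}. The concrete failure is that the reflected extension $\tilde h$ is neither a homeomorphism nor monotone in the sense of Lebesgue, so on a full disk $B \subset \tilde\X$ you no longer have $\diam \tilde h(\overline B) = \diam \tilde h(\partial B)$; the length bound on $\tilde h(S_{\rho^*})$ then tells you nothing about $|h(x_1)-h(x_2)|$. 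Your claim that $\ell \geqslant 2$ is needed ``so that reflection charts can be set up unambiguously'' is incorrect: a simply connected Lipschitz domain has a single closed Lipschitz boundary curve, on which the same charts exist.

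The hypothesis $\ell \geqslant 2$ enters through the topology of the target, not through charts. With at least two boundary components, $h$ sends distinct components of $\partial\X$ onto distinct components of $\partial\Y$, and these lie a fixed positive distance apart in $\overline\Y$. A short cross-cut $h(S_{\rho^*}\cap\X)$ therefore cannot cut off a large sub-region of $\Y$: the piece it separates from one boundary component of $\Y$ must miss the remaining components, forcing its diameter to be comparable to the length of the cross-cut plus a constant depending only on $\Y$. It is exactly this topological obstruction that is absent for the unit disk and permits the M\"obius counterexamples of Remark~\ref{rem:modofcontdirc}; the paper itself exploits this in the proof of Lemma~\ref{lem:unifmodcont}, where the simply connected case is handled by puncturing at a well-chosen interior point to manufacture a second boundary component.
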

\begin{remark}\label{rem:modofcontdirc}
Inequality~\eqref{eq:modofcontdirc} fails
when $\ell=1$ and $p=2$. For this, consider a sequence of the M\"obius transformations $h_k \colon \mathbb D \onto \mathbb D$, $k=1,2, \dots$
\[h_k(z) = \frac{z+a_k}{1+a_k z}\, , \qquad 0<a_k<1  \quad \textnormal{and } a_k \nearrow 0 \, .  \]
The mappings are fixed at two boundary points,
\[h_k(1)=1 \quad \textnormal{ and } \quad \quad h_k (-1)=-1 \,  \]
and are equiintegrable:
 \[\int_\X \abs{Dh_k}= 2\, \int_\X J_h (x) \, \dtext x = 2\pi \, . \]
The sequence $h_k \colon  \mathbb D \onto \mathbb D$ approaches the constant mapping $h(z) \equiv 1$.
Obviously, we are losing equicontinuity of the boundary mappings $h_k \colon \partial \mathbb D \onto \partial \mathbb D$, in contradiction with~\eqref{eq:modofcontdirc}.
\end{remark}

\subsection{Change of variables formula}
We say that $h \colon \X \to \mathbb C$ satisfies the {\it Lusin (N) condition} if for every $E \subset \X$ such that $\abs{E}=0$ we have $\abs{h(E)}=0$.
\begin{lemma}\label{lem:lusin}
Suppose that $h \in \W^{1,2}(\X , \C)$ with $J_h >0$ a.e. Then $h$ satisfies the Lusin (N) condition.
\end{lemma}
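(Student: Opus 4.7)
The plan is a Lusin-type decomposition of $\X$ into pieces on which $h$ is Lipschitz, combined with the positivity of $J_h$ to control the exceptional null set. Let $E \subset \X$ with $|E| = 0$; the goal is $|h(E)| = 0$.

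First, since $h \in \W^{1,2}(\X,\C) \subset \W^{1,1}_{\loc}(\X,\C)$, the Calder\'on--Zygmund theorem gives that $h$ is approximately differentiable at almost every point of $\X$. Together with $J_h > 0$ a.e., this produces a full-measure subset $\X_+ \subset \X$ on which the approximate derivative exists and is invertible.

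Second, I would apply a standard Sobolev Lusin approximation (Whitney extension combined with Stepanov's theorem): there exist closed sets $F_1 \subset F_2 \subset \cdots \subset \X_+$ with $|\X_+ \setminus \bigcup_k F_k| = 0$ and Lipschitz maps $g_k \colon \X \to \C$ coinciding with $h$ on $F_k$. The Lipschitz property then yields $|h(E \cap F_k)| = |g_k(E \cap F_k)| = 0$ for every $k$, so $|h(E \cap \bigcup_k F_k)| = 0$.

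The main obstacle is the residual null set $N = \X \setminus \bigcup_k F_k$: although $|N| = 0$, its image under $h$ need not automatically be null (cf.\ Ponomarev's example for continuous $\W^{1,n}$ maps in $\R^n$). Here the strict positivity $J_h > 0$ is essential. With $K_h(x) = |Dh(x)|^2 / (2 J_h(x))$, finite almost everywhere, one has $|Dh|^2 \le 2 K_h J_h$ pointwise a.e., so $h$ is a mapping of finite distortion. In the critical planar case $p = n = 2$, the Iwaniec--\v{S}ver\'ak theory of such mappings yields the Lusin (N) property, whence $|h(N)| = 0$ and finally $|h(E)| = 0$.
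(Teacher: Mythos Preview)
Your argument ultimately rests entirely on the final paragraph: once you know that a $\W^{1,2}$ mapping of finite distortion satisfies the Lusin $(N)$ condition, you are done for \emph{every} null set, and the Lipschitz--Lusin decomposition of the preceding paragraphs is superfluous. So the proof collapses to step~3, and the question is whether that step is justified.

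Here the citation is a genuine gap. The Iwaniec--\v{S}ver\'ak theorem (the paper's reference [IS]) asserts that a planar $\W^{1,2}$ map with \emph{integrable} distortion $K_h \in \mathscr L^1$ is open and discrete; it says nothing about condition $(N)$, and it does not apply under the weaker hypothesis $K_h < \infty$ a.e.\ that you actually have from $J_h>0$ a.e. The result you need --- that a $\W^{1,n}$ mapping of finite distortion satisfies $(N)$ --- is true, but it is a different theorem, and its standard proof proceeds via monotonicity in the sense of Lebesgue rather than via openness/discreteness.

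That is exactly the paper's route. It simply quotes two facts: (i) a map $h \in \W^{1,2}(\X,\C)$ with $J_h > 0$ a.e.\ is monotone in the sense of Lebesgue (Iwaniec--Koskela--Onninen, [IKOin, Proposition~4.1]); and (ii) a monotone $\W^{1,2}_{\loc}$ map satisfies condition $(N)$ (Haj\l asz--Mal\'y, [HMa, Lemma~1.2]). Your instinct that finite distortion is the operative mechanism is correct, but the argument runs through monotonicity rather than through Iwaniec--\v{S}ver\'ak, and the Whitney/Stepanov detour can be dropped entirely.
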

Lemma~\ref{lem:lusin} follows because a monotone mapping in the sense of Lebesgue in the Sobolev class $\W_{\loc}^{1,2}(\X, \mathbb C)$
satisfies the Lusin (N) condition, see e.g.~\cite[Lemma 1.2]{HMa}.  On the other hand, a mapping  $h \in \W^{1,2}(\X , \C)$ with $J_h >0$ a.e.  is
monotone in the sense of Lebesgue, see~\cite[Proposition 4.1]{IKOin}. The Lusin property is very important as it allows us to obtain the change of variables formula, see~\cite[Theorem 6.3.2]{IMb}.
\begin{lemma}\label{lem:changeofvariables}
Let $h \colon \X \to \mathbb C$ be a mapping in the Sobolev class $\W^{1,2} (\X, \C)$ with $J_h(x)>0$ for almost every  $x$ in $\X$.
If $\eta$ is a nonnegative Borel measurable function on $\mathbb C$
and $A$  a Borel measurable set in $\X$, then
\begin{equation}
\int_A \eta \big( h(x)\big)\, J_h (x) \, \dtext x = \int_{h(A)} \eta (y) N_h (y, A)\, \dtext y
\end{equation}
where $N_h(y,A)$ denotes the cardinality of the set $\{x\in A \colon h(x)=y\}$.
\end{lemma}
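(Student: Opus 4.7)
The plan is to reduce the formula to the classical area theorem for Lipschitz maps by means of a countable Sobolev Lusin decomposition of $h$, and then to absorb the residual exceptional set using the Lusin (N) property supplied by Lemma~\ref{lem:lusin}.

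First, I would note that $h\in \W^{1,2}(\X,\C)\subset \W^{1,1}_{\loc}(\X,\C)$ is approximately differentiable almost everywhere on $\X$, with approximate derivative coinciding a.e.\ with the distributional derivative $Dh$ (Calder\'on--Federer). A standard Lusin-type approximation for Sobolev functions then yields a disjoint Borel decomposition $\X = E_0 \cup \bigcup_{k=1}^\infty E_k$ with $\abs{E_0}=0$ and $h|_{E_k}$ Lipschitz. Extending each $h|_{E_k}$ to a Lipschitz self-map of $\R^2$ and applying the classical area formula, I obtain, for every Borel set $A\subset \X$,
\begin{equation*}
\int_{E_k\cap A} \eta(h(x))\,J_h(x)\,\dtext x \;=\; \int_{\C} \eta(y)\,\#\{x\in E_k\cap A\colon h(x)=y\}\,\dtext y.
\end{equation*}

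Second, summing over $k\geqslant 1$ and applying monotone convergence gives
\begin{equation*}
\int_{A\setminus E_0} \eta(h(x))\,J_h(x)\,\dtext x \;=\; \int_{\C} \eta(y)\,N_h(y, A\setminus E_0)\,\dtext y.
\end{equation*}
I would then invoke Lemma~\ref{lem:lusin}: since $J_h>0$ a.e., $h$ satisfies the Lusin (N) condition, hence $\abs{h(E_0)}=0$. Consequently adding $E_0$ back on the left costs nothing (because $\abs{E_0}=0$), while on the right $N_h(y,A\setminus E_0)$ and $N_h(y,A)$ differ only on the null set $h(E_0)$. Since $N_h(y,A)=0$ off $h(A)$, the target domain of integration may be restricted from $\C$ to $h(A)$, producing the stated identity for $\eta=\mathbf{1}_B$. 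The general case of a nonnegative Borel $\eta$ follows by approximating from below by simple functions and applying monotone convergence on both sides.

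The one step requiring care is the bookkeeping: one must check that the classical derivative of the Lipschitz extension of $h|_{E_k}$ agrees a.e.\ on $E_k$ with the Sobolev derivative $Dh$ (so that the two Jacobians genuinely match), and that the multiplicities $\#\{x\in E_k\cap A\colon h(x)=y\}$ assemble upon summation to $N_h(y, A\setminus E_0)$ for a.e.\ $y$. Both facts are standard consequences of approximate differentiability, and the Lusin (N) property is exactly what guarantees that the exceptional set $E_0$ contributes nothing on the target side; without it the argument would break down precisely at the step where one discards $E_0$.
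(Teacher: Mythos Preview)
Your proof is correct and follows the standard route: the paper does not give its own argument for this lemma but simply cites \cite[Theorem~6.3.2]{IMb}, and your Lipschitz--Lusin decomposition together with the area formula and Lemma~\ref{lem:lusin} is precisely how that reference proceeds. In effect you have unpacked the citation.
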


\subsection{Weak compactness of Jacobians}

\begin{lemma}\label{lem:weakcomp}
Let $\X$ be a domain in $\C$ and  $h_k \in \W^{1,2} (\X, \C)$  a sequence of mappings with $J(x,h_k) \ge 0$ a.e. in $\X$ converging weakly
in $\W^{1,2} (\X, \C)$ to $h \in \W^{1,2} (\X, \C)$. Then the Jacobians $J(x,h_k)$ converge weakly
in $\mathscr L^1_{\loc} (\X)$ to $J(x,h) $ and $J(x,h) \ge 0$ a.e. in $\X$. Precisely,
\[ \displaystyle{\lim_{k \rightarrow \infty}} \int_{\mathbb X} \varphi(x) J(x, h_k)\,  \dtext x = \int_{\mathbb X} \varphi (x)J(x, h) \,  \dtext x\]
for every $\varphi \in \mathscr L^{\infty}(\mathbb X)$ with compact support.
\end{lemma}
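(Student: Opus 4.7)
The plan is to pass to the limit in a divergence form representation of the Jacobian, exploiting the fact that the Jacobian of a $W^{1,2}$ map in the plane is a null Lagrangian with a distributional primitive involving only one derivative on each component.

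First, I would record the distributional identity: for $h=(h^1,h^2)\in \W^{1,2}(\X,\C)$ and any $\varphi\in \mathscr C^\infty_\circ(\X)$, integration by parts (justified first for smooth $h$ and then by the density of $\mathscr C^\infty\cap \W^{1,2}$, using that each product $h^1\, \partial_j h^2$ lies in $\mathscr L^1_{\loc}$ by Cauchy--Schwarz) gives
\begin{equation}\label{eq:distjac}
\int_{\X} \varphi(x)\,J_h(x)\,\dtext x \;=\; \int_{\X} h^1(x)\bigl[\partial_1 h^2(x)\,\partial_2\varphi(x) - \partial_2 h^2(x)\,\partial_1\varphi(x)\bigr]\,\dtext x .
\end{equation}
This representation is the key: it writes $\int \varphi J_h$ as a bilinear expression in which one factor is a component of $h$ (which converges \emph{strongly} on compacta by Rellich--Kondrachov) while the other factor is a partial derivative of $h$ (which only converges weakly).

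Next, fix $\varphi\in \mathscr C^\infty_\circ(\X)$ and apply \eqref{eq:distjac} to $h_k$. Since $h_k\rightharpoonup h$ in $\W^{1,2}(\X,\C)$, the embedding $\W^{1,2}_{\loc}\hookrightarrow \mathscr L^2_{\loc}$ is compact, so $h_k^1 \to h^1$ strongly in $\mathscr L^2(\supp \varphi)$, while $\partial_j h_k^2 \rightharpoonup \partial_j h^2$ weakly in $\mathscr L^2(\supp \varphi)$. Multiplying the bounded smooth function $\partial_j\varphi$ into the weak factor and combining strong$\,\times\,$weak convergence in $\mathscr L^2$ to get convergence of the $\mathscr L^1$ pairing, each of the two terms on the right of \eqref{eq:distjac} converges to the corresponding term for $h$. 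Therefore $\int \varphi J_{h_k}\to \int \varphi J_h$ for every $\varphi\in \mathscr C^\infty_\circ(\X)$, which is the required weak convergence in $\mathscr L^1_{\loc}(\X)$ on test functions in $\mathscr C^\infty_\circ$.

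Finally, I would upgrade to arbitrary $\varphi\in \mathscr L^\infty(\X)$ with compact support and verify non-negativity. The uniform bound $\|h_k\|_{\W^{1,2}}\le M$ together with \eqref{eq:distjac} applied to a cut-off equal to $1$ on $\supp\varphi$ yields a uniform local $\mathscr L^1$ bound on $J_{h_k}\ge 0$; hence the non-negative measures $J_{h_k}\,\dtext x$ are locally equi-bounded. Given any $\varphi\in \mathscr L^\infty_c(\X)$, approximate it in $\mathscr L^1$ (with uniform $\mathscr L^\infty$ bound) by $\varphi_\varepsilon\in \mathscr C^\infty_\circ(\X)$ supported in a fixed compact neighborhood of $\supp\varphi$; a standard $3\varepsilon$-argument using the uniform $\mathscr L^1_{\loc}$ bound on $J_{h_k}$ and the just proved convergence on smooth test functions gives $\int\varphi J_{h_k}\to \int\varphi J_h$. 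Taking $\varphi\ge 0$ yields $\int \varphi J_h\ge 0$ for every non-negative $\varphi\in \mathscr L^\infty_c(\X)$, so $J_h\ge 0$ almost everywhere.

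The main obstacle is the rigorous justification of the distributional identity \eqref{eq:distjac} at the $\W^{1,2}$ regularity threshold: one has to know that the product $h^1\,\partial_j h^2$ is integrable (Cauchy--Schwarz plus local boundedness of $h^1$ via Rellich) and that approximation by smooth maps is compatible with both sides; once \eqref{eq:distjac} is in hand, the compactness argument and the subsequent passage to $\mathscr L^\infty$ test functions are essentially routine.
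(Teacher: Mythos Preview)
The paper does not give its own proof; it simply cites \cite[Theorem 8.4.2]{IMb}. Your argument through the distributional identity and the strong--weak pairing (Steps 1--3) is the standard proof of \emph{distributional} convergence of Jacobians, and that part is fine.

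The gap is in your Step 4, the passage from $\varphi\in\mathscr C^\infty_\circ(\X)$ to arbitrary $\varphi\in\mathscr L^\infty(\X)$ with compact support. Your proposed $3\varepsilon$ argument does not close: if you approximate $\varphi$ by $\varphi_\varepsilon\in\mathscr C^\infty_\circ$ in $\mathscr L^1$ with a uniform $\mathscr L^\infty$ bound, then the cross term
\[
\Big|\int_{\X}(\varphi-\varphi_\varepsilon)\,J_{h_k}\Big|
\]
cannot be made small uniformly in $k$ using only $\|\varphi-\varphi_\varepsilon\|_{\mathscr L^1}\to 0$ and a uniform bound on $\|J_{h_k}\|_{\mathscr L^1(K)}$. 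You would need either $\|\varphi-\varphi_\varepsilon\|_{\mathscr L^\infty}\to 0$ (false for general $\varphi\in\mathscr L^\infty$) or a uniform $\mathscr L^\infty$ bound on $J_{h_k}$ (false). What is actually required is \emph{equi-integrability} of $\{J_{h_k}\}$ on compacta, and this is precisely where the hypothesis $J_{h_k}\ge 0$ enters in a non-trivial way: by M\"uller's higher integrability theorem (or the $\mathcal H^1$--BMO duality), a map $g\in\mathscr W^{1,2}(\X,\C)$ with $J_g\ge 0$ satisfies $J_g\in L\log L_{\loc}(\X)$ with a bound depending only on $\|Dg\|_{\mathscr L^2}$; applied to the uniformly bounded sequence $\{h_k\}$ this yields local equi-integrability of $\{J_{h_k}\}$ by the de la Vall\'ee--Poussin criterion, after which the upgrade to $\mathscr L^\infty$ test functions is immediate. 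Without this ingredient one only obtains weak-$*$ convergence as measures, not weak $\mathscr L^1_{\loc}$ convergence. (A small side remark: your phrase ``local boundedness of $h^1$ via Rellich'' is inaccurate in two dimensions, since $\mathscr W^{1,2}$ does not embed into $\mathscr L^\infty$; but this is harmless, as $\mathscr L^2\times\mathscr L^2\subset\mathscr L^1$ is all that is needed for \eqref{eq:distjac}.)
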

For a proof of this lemma we refer to~\cite[Theorem 8.4.2]{IMb}.

\subsection{Polyconvexity of neohookean integrand}
The remarkable feature of the Neohookian energy is the polyconvexity of its integrand. Instead of the general definition~\cite{Bac, Morpc} let us confine ourselves, as a consequence, to the so-called {\it gradient inequalities}.

Let $p \ge 2$ and $q>0$.  For arbitrary square matrices $A\in \mathbb R^{2\times 2}$ and $A_\circ\in \mathbb R^{2\times 2}$, we have
\[
\begin{split}
\abs{A}^p- \abs{A_\circ}^p &= \left( \abs{A}^2 \right)^\frac{p}{2} - \left( \abs{A_\circ}^2 \right)^\frac{p}{2}  \ge \frac{p}{2} \left( \abs{A_\circ}^2 \right)^{\frac{p}{2}-1} \left( \abs{A}^2 - \abs{A_\circ}^2   \right) \\
& \ge  \frac{p}{2}\,  \abs{A_\circ}^{p-2} \, 2 \langle A_\circ , A-A_\circ \rangle =  \langle   \,  p\,  \abs{A_\circ}^{p-2} A_\circ  , A-A_\circ \, \rangle
\end{split}
\]
where $\langle \cdot , \cdot \rangle$ stands for the scalar product of matrices.

For arbitrary positive numbers $J$ and $J_\circ$, we have
\[\frac{1}{J^q}-\frac{1}{J^q_\circ}  \ge \frac{q}{J_\circ^{q+1}}  \left( J_\circ -J\right) \, .\]
Next we show that the lower-semicontinuity of neohookean integral follows from the above gradient inequalities.

\begin{lemma}\label{lem:poly}
Let $\X$ be a domain in $\C$, $p\ge 2$ and $q>0$. Suppose that  $h_k \in \W^{1,p} (\X, \C)$  is a sequence of mappings with $J(x,h_k) >0 $ a.e. in $\X$ converging weakly
in $\W^{1,p} (\X, \C)$ to $h \in \W^{1,p} (\X, \C)$ and $\mathsf E^p_q [h_k]\le E<\infty$. Then
\[  \mathsf E^p_q [h] \le \liminf_{k\to \infty}  \mathsf E^p_q [h_k]  \, .  \]
\end{lemma}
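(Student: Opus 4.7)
The plan is to establish lower semicontinuity separately for the two terms in $\mathsf E^p_q$ and then combine them, using in each case the pointwise gradient inequality stated just before the lemma together with the weak compactness of Jacobians (Lemma~\ref{lem:weakcomp}).

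For $\int_\X |Dh|^p\,\dtext x$, apply the first gradient inequality pointwise a.e.\ with $A=Dh_k(x)$ and $A_\circ=Dh(x)$:
\[ |Dh_k|^p \ge |Dh|^p + \langle\, p\,|Dh|^{p-2}Dh,\, Dh_k-Dh\,\rangle. \]
The matrix field $p|Dh|^{p-2}Dh$ lies in $\mathscr L^{p/(p-1)}(\X,\R^{2\times 2})$, so the weak convergence $Dh_k\rightharpoonup Dh$ in $\mathscr L^p(\X)$ kills the correction term upon integration, yielding $\liminf_k\int_\X|Dh_k|^p\,\dtext x \ge \int_\X|Dh|^p\,\dtext x$.

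For $\int_\X J_h^{-q}\,\dtext x$ the situation is subtler, since the conclusion $J_h\ge 0$ a.e.\ from Lemma~\ref{lem:weakcomp} is not known to be strict. I would fix a nonnegative cutoff $\varphi\in\mathscr C^\infty_\circ(\X)$ and a parameter $\delta>0$, and invoke the second gradient inequality with $J_\circ=J_h(x)+\delta$ and $J=J_{h_k}(x)$:
\[ J_{h_k}^{-q} \ge (J_h+\delta)^{-q} + q(J_h+\delta)^{-q-1}\bigl(J_h+\delta-J_{h_k}\bigr). \]
Multiplying by $\varphi$ and integrating, the multiplier $q\varphi(J_h+\delta)^{-q-1}$ is bounded (by $q\|\varphi\|_\infty\delta^{-q-1}$) and compactly supported, so Lemma~\ref{lem:weakcomp} gives that the correction term tends to $0$ as $k\to\infty$; hence
\[ \liminf_{k\to\infty}\int_\X \varphi\,J_{h_k}^{-q}\,\dtext x \;\ge\; \int_\X \varphi\,(J_h+\delta)^{-q}\,\dtext x. \]
Sending $\delta\to 0^+$ by monotone convergence and then letting $\varphi\nearrow 1$ on $\X$ yields $\liminf_k\int_\X J_{h_k}^{-q}\,\dtext x \ge \int_\X J_h^{-q}\,\dtext x$. (If $J_h=0$ on a set of positive measure then the right-hand side is $+\infty$, which together with the uniform bound $\mathsf E^p_q[h_k]\le E$ forces a contradiction, so this case cannot arise.)

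To splice the two estimates together, extract a subsequence along which $\mathsf E^p_q[h_k]$ converges to its $\liminf$ and along which both $\int_\X|Dh_k|^p\,\dtext x$ and $\int_\X J_{h_k}^{-q}\,\dtext x$ converge separately to nonnegative limits $L_1,L_2$; the previous two steps then give $\int_\X|Dh|^p\,\dtext x\le L_1$ and $\int_\X J_h^{-q}\,\dtext x\le L_2$, so $\mathsf E^p_q[h]\le L_1+L_2=\liminf_k\mathsf E^p_q[h_k]$. The main technical obstacle is the Jacobian term: a direct application of the convexity inequality with $\delta=0$ is illegitimate because $qJ_h^{-q-1}$ need not be a bounded, compactly supported test function for the weak-$\mathscr L^1_{\loc}$ convergence of $J_{h_k}$. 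The double truncation by $\delta$ and $\varphi$, followed by monotone convergence, is precisely the device that circumvents this difficulty.
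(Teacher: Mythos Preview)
Your approach is essentially the paper's: both arguments apply the two pointwise convexity inequalities and introduce a truncation (your $\delta>0$ and $\varphi\in\mathscr C^\infty_\circ(\X)$ play the role of the paper's $\varepsilon>0$ and compact $F\Subset\X$) so that the weight against which $J_{h_k}-J_h$ is tested is bounded with compact support, after which monotone convergence removes the truncation. The only organizational difference is that the paper treats the two terms together and regularizes both $J_{h_k}$ and $J_h$ by the same $\varepsilon$, whereas you treat them separately and regularize only $J_h$.

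One small slip: with your choice $J_\circ=J_h+\delta$, $J=J_{h_k}$, the correction term
\[
q\int_\X \varphi\,(J_h+\delta)^{-q-1}\bigl(J_h+\delta-J_{h_k}\bigr)\,\dtext x
\]
does \emph{not} tend to $0$ as $k\to\infty$; by Lemma~\ref{lem:weakcomp} it tends to $q\delta\int_\X\varphi\,(J_h+\delta)^{-q-1}\,\dtext x>0$. This is harmless, since the limit is nonnegative and therefore the desired inequality $\liminf_k\int_\X\varphi\,J_{h_k}^{-q}\,\dtext x\ge\int_\X\varphi\,(J_h+\delta)^{-q}\,\dtext x$ still follows. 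The paper's choice of regularizing both sides (comparing $(\varepsilon+J_{h_k})^{-q}$ with $(\varepsilon+J_h)^{-q}$) makes the correction genuinely vanish, which is slightly cleaner but not essential.
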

\begin{proof}
Choose and fix a positive number $\varepsilon$ and a compact subset $F\Subset \X$, the above gradient inequalities imply
\[
\begin{split}
& \int_F \left[  \abs{Dh_k(x)}^p + [\varepsilon + J(x,h_k)]^{-q}  \right]  - \int_F \left[  \abs{Dh(x)}^p + [\varepsilon + J(x,h)]^{-q}  \right] \\
&\quad  \ge p \int_F  \langle   \,  p\,  \abs{Dh(x)}^{p-2} Dh(x)  , Dh_k(x)-Dh(x) \, \rangle \, \dtext x \\ & \quad  + q \int_F \frac{J(h,x)-J(h_k, x)}{[\varepsilon + J(x,h)]^{q+1}} \, \dtext x
\end{split}
\]
Now letting $k \to \infty$ the first integral term goes to zero, because $Dh_k -Dh \to 0$ weakly in $\mathscr L^p (F)$ whereas $\abs{Dh}^{p-2} Dh$ belongs to the dual space of  $\mathscr L^p (F)$. Concerning the last integral term we appeal to Lemma~\ref{lem:weakcomp} on weak compactness of the Jacobian determinants. Accordingly,
\[  \int_F \frac{J(h,x)-J(h_k, x)}{[\varepsilon + J(x,h)]^{q+1}} \, \dtext x \to 0  \]
where our test function $\varphi (x) = \frac{\chi_F(x)}{[\varepsilon + J(x,h)]^{q+1}  }\le \frac{1}{\varepsilon}$ lies in $\mathscr L^\infty (\X)$ and has compact support. We thus have an estimate
\[
\begin{split}
&  \int_F \left[  \abs{Dh(x)}^p + [\varepsilon + J(x,h)]^{-q}  \right] \, \dtext x \\
&  \le \liminf_{k \to \infty} \int_F \left[  \abs{Dh_k(x)}^p + [\varepsilon + J(x,h_k)]^{-q}  \right]  \, \dtext x \\
&  \le \liminf_{k \to \infty} \int_\X \left[  \abs{Dh_k(x)}^p + [J(x,h_k)]^{-q}  \right]  \, \dtext x= \liminf_{k \to \infty}  \mathsf E^p_q [h_k] < \infty \, .
\end{split}
\]
Consider a sequence $\{\varepsilon_j\}$ of positive numbers converging to zero and an increasing sequence of compact subsets $F_1 \subset F_2 \subset \dots$ with $\bigcup F_n=\X$. Thus,
\[  \int_{F_n} \left[  \abs{Dh(x)}^p + [\varepsilon_n + J(x,h)]^{-q}  \right] \, \dtext x \le  \liminf_{k \to \infty}  \mathsf E^p_q [h_k] < \infty \, .  \]
Letting $n \to \infty$, by the monotone convergence theorem, the desired estimate $\mathsf E^p_q [h] \le \liminf\limits_{k\to \infty}  \mathsf E^p_q [h_k] $ follows.
\end{proof}

\section{The case of $\X=\Y$}
When $\X=\Y$ the identity map is a natural candidate for the minimizer. The case $1<p<2$ (compressible neohookean materials), however, offers further challenges. To explain this we take $q=1$ for simplicity. First of all when $p\ge 2$ we have
\begin{example}\label{ex:id}
The identity mapping $h_{\circ}=\Id \colon \X \onto \X$ minimizes the neohookean energy $\mathsf E^p_1$,  when $p\ge 2$, subject to all homeomorphisms in $\mathscr H^{p} (\X, \X)$. In fact, this follows from the inequality
\begin{equation}\label{eq:identity}
\mathsf E^p_1[h] \ge (2^\frac{p}{2}+1)\abs{\X} = \mathsf E^p_1[h_{\circ}]   \qquad \textnormal{for all } h \in \mathscr H^{p} (\X, \X) \, .
\end{equation}
\end{example}
The proof of this inequality is obtained by the methods of   {\it Free-Lagrangians}.
 A free-Lagrangian is a special case of {\it null Lagrangian}~\cite{Bac}. This is a nonlinear differential $2$-form
 defined on Sobolev homeomorphisms $h \colon \X \onto \Y$ whose integral depends only on the homotopy class of $h$, see~\cite{IOan}. The simplest free Lagrangians is the area  form $\det Dh(x)\, \dtext x$  for $h \in \mathscr W^{1,p}(\X, \X)$ with $p \geqslant 2$. This is  a key player in the proof of~\eqref{eq:identity}. The unavailability of the area form is exactly why our  arguments for Theorem
 \ref{thm:monodiscrete}  do not apply when $p<2$. Nevertheless, it is not clear whether ~\eqref{eq:identity} remains valid for $p<2$. In~\cite{KOpharm} it is shown that  the identity mapping $h^{\ast}=\Id \colon \mathbb D_\circ \onto \mathbb D_\circ$ from the punctured disk $\mathbb D_\circ = \{z \in \C \colon 0< \abs{z} < 1\}$ onto itself does not minimize the $p$-harmonic energy when $1 \leqslant p < p_1$, for some $1<p_1<2$. Namely,
\begin{equation}\label{eq:fbn=2p>1}
\underset{h\in \mathscr H^{p} (\mathbb D_\circ, \mathbb D_\circ)}{\inf} \int_\A \abs{Dh(x)}^p\, \dtext x <  \int_\A \abs{ D h^{\ast} (x) }^p\, \dtext x \,   .
\end{equation}
Let us point out that the identity mapping is always a minimizer in the class of radially symmetric homeomorphisms.

\begin{proof}[Proof of  Example~\ref{ex:id}]


 First applying Young's inequality $ab \le \frac{a^{\alpha}}{\alpha}+ \frac{b^{\beta}}{\beta}$, $\frac{1}{\alpha}+\frac{1}{\beta}=1$, we observe a pointwise inequality
\[\abs{Dh}^p  \ge p\,  2^{\frac{p-4}{2}}\,  \abs{Dh}^2 -(p-2) 2^{\frac{p-2}{2}} \, .\]
Equality occurs if $\abs{Dh}^2=2$.
Then, Hadamard's inequality $\abs{Dh}^2 \ge 2 J_h$, $J_h=\det Dh$, yields
\[\abs{Dh}^p  \ge p\,  2^{\frac{p-2}{2}}\, \left[  J_h -1+\frac{2}{p} \right] \, .\]
Again, we have the equality when $h=\Id$. Hence
\[\abs{Dh}^p  + \frac{1}{J_h} \ge \left(p\, 2^\frac{p-2}{2} -1\right)J_h -(p-2)2^\frac{p-2}{2} +J_h + \frac{1}{J_h} \, \]
where  $J_h + \frac{1}{J_h} \ge 2$. Therefore,
\[\abs{Dh}^p  + \frac{1}{J_h} \ge \left(p\, 2^\frac{p-2}{2} -1\right)J_h -(p-2)2^\frac{p-2}{2} +2 \, .\]
This gives us a desired estimate of the stored energy integrand by means of free Lagrangians; namely, $J_h$ and a constant function.
Integrating over the domain $\X=\Y$, the claimed estimate~\eqref{eq:identity} follows. Equality occurs for the identity map; and only for isometries $h \colon \X \onto \X$.

\end{proof}

\section{Proof of Theorem~\ref{thm:partialinj}}
\begin{proof}
{\bf Step 1. ($\abs{\Y_h}=\abs{\Y}$)}. First, since $h\in \W^{1,2} (\X, \C)$ and $J_h(x) >0$ a.e., Lemma~\ref{lem:changeofvariables} gives
\begin{equation}\label{eq:partial1form}
\int_\X  J_h (x) \, \dtext x = \int_{\Y}  N_h (y, \X)\, \dtext y
\end{equation}
where $N_h(y,\X)$ denotes the cardinality of the set $\{x\in \X \colon h(x)=y\}$.

Second, for an orientation preserving homeomorphism $g \colon \X \onto \Y$ in the Sobolev class $\W^{1,2} (\X, \C)$ we have
\begin{equation}
\int_\X  J_g (x) \, \dtext x = \abs{\Y}\, .
\end{equation}
Now combining this with  Theorem~\ref{thmmono} and Lemma~\ref{lem:weakcomp} for an orientation preserving  monotone $h \colon \X \onto \Y$ in $\W^{1,2} (\X, \C)$ we have
\begin{equation}\label{eq:partialform2}
\int_\X  J_h (x) \, \dtext x = \abs{\Y}\, .
\end{equation}
Therefore, by~\eqref{eq:partial1form} and~\eqref{eq:partialform2} for a monotone $h \colon \X \onto \Y$ in $\W^{1,2} (\X, \C)$ with $J_h(x)>0$ a.e. in $\X$,
we obtain  $ N_h (y, \X) =1 $ for a.e. $y$ in $\Y$; that is, $\abs{\Y_h}=\abs{\Y}$. Since  $\Y$ is a Lipschitz domain, it holds that $\abs{\partial \Y}=0$.\\
{\bf Step 2. ($\abs{\X_h}=\abs{\X}$)}. The claim is that $\abs{h^{-1} (\Y_h)}=\abs{\X}$. Indeed,  according to Lemma~\ref{lem:changeofvariables}, we have
\[ \int_{\X \setminus h^{-1} (\Y_h)} J_h(x) = \abs{\Y \setminus \Y_h} =0\, .  \]
Furthermore since $J_h(x)>0$ a.e. in $\X$ we have $\abs{\X \setminus h^{-1} (\Y_h)} =0$.
\end{proof}

\section{Constructing   Examples~\ref{ex:nonhomeo}}

\begin{proof}[Proof  of  Example~\ref{ex:nonhomeo}]
Consider the rectangles $ \X = (-1,1) \times (-2,2)= \Y$. To construct a monotone map $h \colon \overline{\X} \onto \overline{\Y}$ we choose and fix parameters $a>- \frac{1}{p}$; $b>1-\frac{1}{p}$ such that $a+b < \frac{1}{q}$. This choice is possible because $0<q< \frac{p}{p-2}$. The map in question is defined by the rule \[ h(x,y) = (u(x,y), v(x,y)) \,\,\,\,  \textnormal{where} \]
\[u(x,y)=x|x|^a, \,\,\,\,\, \textnormal{for} \, -1 \leqslant x \leqslant 1.\]
\begin{equation*} v(x,y)=
\begin{cases}
y |x|^b, \,\,\, \textnormal{for} \, |y| \leqslant 1 \\
(2-|x|^b)y + 2(|x|^b-1) \frac{y}{|y|}, \,\, \textnormal{for} \, 1 \leqslant |y| \leqslant 2
\end{cases}
\end{equation*}
It is worth noting that for $x$ fixed the function $y \to v(x,y)$ is linear in each of the following intervals $y \in [-1,1]$, $y \in [1,2]$ and $y \in [-2,-1]$, see Figure \ref{fig2}. Clearly, we have 
\[ h^{-1}(0,0)= \mathbb I  \bydef \{(0,y)\colon  |y| \leqslant 1\} \]
Outside this interval $h$ is a bijection $h \colon \overline{\X} \setminus \mathbb{I} \onto \overline{Y} \setminus \{ (0,0)\}$. 
Its inverse $h^{-1} \bydef  \overline{\mathbb Y} \setminus \{ (0,0)\} \onto \overline{\X} \setminus \mathbb I$, takes the form $f(u,v)= (x(u,v), y(u,v))$, where
\[ x(u,v)= u|u|^{\frac{-a}{1+a}}, \,\,\, -1 \leqslant u \leqslant 1 \]
\[y(u,v)=
\begin{cases}
\displaystyle{\frac{v\pm 2 (1- |u|^{\frac{b}{1+a}})}{2- |u|^{\frac{b}{1+a}}}}, \,\,\,\, &\textnormal{whenever} \,\, \pm v \geqslant |u|^{\frac{b}{1+a}} \,\, \textnormal{respectively} \\
\displaystyle{v|u|^{\frac{-b}{1+a}}}, \,\,\,\,\,\,\,\,\,\,\,\,\,\,\,\,\,\,\,\,\, &\textnormal{whenever} \,\,\, |v| \leqslant |u|^{\frac{b}{1+a}}.
\end{cases}
\]
Thereby, $h$ is a monotone map. 

Concerning the energy of $h$, because of symmetries it is enough to evaluate the energy over the rectangle $(0,1) \times (0,2)$. The formula takes the form
\[ \mathsf E^p_q[h] = 4 \int_0^1 \left[\int_0^1 \mathsf E(x,y) \dtext y + \int_1^2\mathsf E(x,y) \dtext y \right] \dtext x\]
where
\[ \mathsf E(x,y)= |Dh(x,y)|^p + \frac{1}{J_h(x,y)^q} \,\,\, \textnormal{for} \,\,x, y \geqslant 0. \]
Consider two cases:
\\
\underline{Case 1}. $0 \leqslant y \leqslant 1$, so
\[\begin{cases}
u(x,y)= x^{a+1}, \,\, &\textnormal{so} \,\, u_x=(a+1)x^a \,\, \textnormal{and} \,\, u_y=0 \\
v(x,y)=x^b y, \,\, &\textnormal{so} \,\, v_x=bx^{b-1} y \,\, \textnormal{and} \,\, v_y=x^b.
 \end{cases}
 \]
 Hence $|Dh(x,y)|^p \leqslant C x^{ap} + x^{(b-1)p}$ and $J_h(x,y)=u_xv_y-u_yv_x=u_xv_y=(a+1)x^{a+b}$. Since $ap>-1$ and $(b-1)p>-1$ we have \[
 \int_0^1 \int_0^1 |Dh(x,y)|^p \dtext x \, \dtext y\, < \infty
 \]
 On the other hand $J_h(x,y)^q=(a+1)^qx^{(a+b)q}$. Since $a+b< \frac{1}{q}$, we have
 \[ \int_0^1 \int_0^1 \frac{\dtext x \, \, \dtext y}{J_h(x,y)^q} < \infty\]
In conclusion, the energy over the square $0 \leqslant x \leqslant 1$, $0 \leqslant y \leqslant 1$ is finite.\\
 \underline{Case 2}.  $1 \leqslant y \leqslant 2$, so
 \[\begin{cases}
u(x,y)= x^{a+1}, \,\, &\textnormal{so} \,\, u_x=(a+1)x^a \,\, \textnormal{and} \,\, u_y=0 \\
v(x,y)=2y- x^b y+ 2x^b -2, \,\, &\textnormal{so} \,\, v_x=bx^{b-1} (2-y) \,\, \textnormal{and} \,\, v_y=2-x^b.
 \end{cases}
 \]
 Hence
 \[ |Dh(x,y)|^p \leqslant C x^{ap} + x^{(b-1)p}\]
 \[ J_h(x,y) =u_xv_y -u_yv_x =u_xv_y= (a+1)x^a(2-x^b) \geqslant (a+1)x^a\]
 As in Case 1,
 \[ \int_0^1 \int_0^1 |Dh(x,y)|^p \dtext x \,\dtext y < \infty \]
On the other hand
\[ \int_0^1 \int_0^1 \frac{\dtext x \, \dtext y}{J_h(x,y)^q} \leqslant \frac{1}{(a+1)^q} \int_0^1 \int_0^1 \frac{\dtext x \, \dtext y}{x^{aq}} < \infty\]
because $aq \leqslant (a+b)q < 1$. In conclusion $\mathsf E_q^p[h] < \infty$, as desired.

\begin{figure}[h]
\centering
\includegraphics[width=12cm]{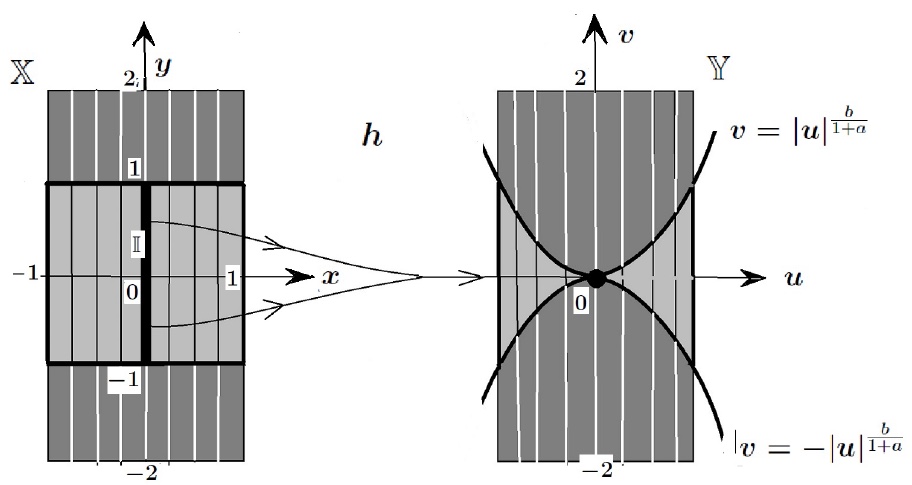}
\caption{}\label{fig2}
\label{fig:cups}
\end{figure}

\end{proof}

\subsection{An extension} \label{rem:preparing} We just have constructed a monotone map $h \colon  \overline{\mathbb X }\onto \overline{\mathbb Y}$ of finite energy  which equals the identity on the vertical sides of the rectangle $\,\overline{\mathbb X} = [-1, 1] \times [-2, 2]\,$. However, restricted to the horizontal sides it is not the identity; it takes the form:
\[ h (x,2) =  (x|x|^a , 2)\;\;\; \textnormal{and} \;\;\; h(x,-2) = (x|x|^a  , -2) ,\;\textnormal{for} \; -1\leqslant x \leqslant 1 \, . \]
We shall still need a map that is equal to the identity on the entire boundary. For this purpose we extend $\,h\,$  to a map $\, \widetilde{h} : \widetilde{\mathbb X}\onto \widetilde{\mathbb Y}\,$,  where $\,\widetilde{\mathbb X} = \widetilde{\mathbb Y } = [-1, 1] \times[-3, 3]\,$, by the rule,

\begin{equation} \widetilde{h}(x,y)=
\begin{cases}
\Big(x|x|^a (3-y) \,+\, x (y-2)\;,\; y \Big), \,\,\, & \textnormal{when} \, 2 \leqslant y \leqslant 3 \\
h(x,y) \;,\;\; &  \textnormal{when}\; -2\leqslant y \leqslant 2\\
\Big(x|x|^a (3+y) \,-\, x (y+2)\;,\; y \Big), \,\,\, &  \textnormal{when} \, -3 \leqslant y \leqslant -2 \, . 
\end{cases}
\end{equation}
 Clearly $\,\widetilde{h}\,$ is monotone and equal to the identity on $\,\partial \widetilde{\mathbb X}\,$. Just as in the computation above  we see  that $\,\mathsf E^p_q[\widetilde{h}] < \infty\,$. Proceeding further in this direction, we may extend $\,\widetilde{h}\,$ to the square $\,\mathbf S \bydef [-4, 4] \times [-4, 4]\,$ by letting it be equal to the identity outside $\,\widetilde{\mathbb X}\,$.
 Let us record this in the following lemma.

  \begin{lemma}\label{ModelMapping}
  For every $\, p>2\,$  and $\,0< q< \frac{p}{p-2}$ there exists a non-injective monotone map  $\,\Phi\in  \mathscr M^p (\mathbf S, \mathbf S)$  of finite $ \mathsf E_q^p\,$-energy, which is the identity near the boundary of $\,\mathbf S\,$. Precisely, we have the following average energy
  \begin{equation}\label{ModelInequality}
  \mathsf E_q^p [\Phi] \,= \frac{1}{|\mathbf S|} \int_\mathbf S \left[ \abs{D\Phi(y)}^p+\frac{1}{ [\,\det D\Phi(y)]^q} \right] \dtext y\;\;\bydef \mathbf E \,< \infty
  \end{equation}
  where $\,|\mathbf S | = 16\,$ is the area of the square  $\mathbf S =[-4, 4] \times [-4, 4]$.
 \end{lemma}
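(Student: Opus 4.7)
The plan is to take $\Phi$ to be precisely the map constructed explicitly in the paragraph preceding the lemma: on $\widetilde{\mathbb X}=[-1,1]\times[-3,3]$ set $\Phi=\widetilde h$, and extend by the identity to all of $\mathbf S$. All assertions of the lemma then reduce to four checks: continuity, monotonicity, non-injectivity, and finite energy. The first three are essentially bookkeeping; the fourth is the only real computation.

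For continuity, the explicit formulas give $\widetilde h(\pm 1,y)=(\pm 1,y)$ on the vertical sides (inherited from $h$) and $\widetilde h(x,\pm 3)=(x,\pm 3)$ on the horizontal sides, so the piecewise definition glues continuously along $\partial\widetilde{\mathbb X}$; trivially $\Phi=\Id$ on the open neighborhood $\mathbf S\setminus\widetilde{\mathbb X}$ of $\partial \mathbf S$. For monotonicity, note that $\widetilde h$ is injective off the vertical segment $\mathbb I=\{0\}\times[-1,1]$ and collapses $\mathbb I$ to the single point $(0,0)$. The preimage under $\Phi$ of a continuum $C\subset\mathbf S$ is thus either the bijective image of $C$ (when $(0,0)\notin C$) or the union of that preimage with $\mathbb I$ (when $(0,0)\in C$); in each case a continuum, so $\Phi$ is monotone in the sense of Lebesgue. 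Non-injectivity is immediate from $\Phi^{-1}(0,0)\supseteq \mathbb I$.

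The heart of the proof is the energy estimate. On $\overline{\mathbb X}$ the finiteness of $\mathsf E_q^p[\Phi]$ is exactly the content of Example~\ref{ex:nonhomeo}, and on $\mathbf S\setminus\widetilde{\mathbb X}$ the integrand equals the constant $2^{p/2}+1$. There remain the two transition strips; consider, say, $R=[-1,1]\times[2,3]$. For $x>0$ and $2\le y\le 3$ a direct differentiation yields
\[
u_x=(a+1)x^a(3-y)+(y-2),\quad u_y=x-x^{a+1},\quad v_x=0,\quad v_y=1,
\]
so $J_\Phi=u_x\ge 0$ and $|D\Phi|^p\le C(1+x^{ap})$; the inequality $ap>-1$ (guaranteed by $a>-1/p$) handles the gradient term. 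For the singular term one splits $R$ according to whether $x^a$ or $y-2$ dominates: where $x^a\ge y-2$ one uses $J_\Phi\ge (a+1)x^a(3-y)\gtrsim x^a$, and where $x^a\le y-2$ one uses $J_\Phi\ge y-2$. A routine Fubini computation reduces integrability to the condition $aq<1$ together with the finiteness of a one-dimensional integral of the form $\int_0^1 t^{\,1/a-q}\,\dtext t$, both of which hold once $a>0$ is chosen sufficiently small. Such a choice is compatible with the constraints $a>-1/p$, $b>1-1/p$, $a+b<1/q$ of Example~\ref{ex:nonhomeo}, which leave $a$ free to approach $0$ from above.

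The main obstacle I expect is exactly this coupled degeneracy of $J_\Phi$ at the corner $(0,2)$, where both summands $(a+1)x^a(3-y)$ and $(y-2)$ vanish simultaneously; the dominance split described above is designed to resolve it. Once the transition strip is under control, summing the three contributions from $\overline{\mathbb X}$, from $R\cup(-R)$ and from the identity region yields $\mathsf E_q^p[\Phi]<\infty$, giving the required average-energy bound $\mathbf E$.
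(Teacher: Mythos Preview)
Your overall approach is exactly the paper's: take the map $\widetilde h$ from the preceding paragraph, extend by the identity to $\mathbf S$, and verify continuity, monotonicity, non-injectivity, and finite energy. The paper itself does not give details on the transition strips, only the phrase ``just as in the computation above,'' so your attempt to supply them is welcome. However, there is a genuine gap in that part.

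Your claim that the constraints ``leave $a$ free to approach $0$ from above'' is false. The constraints $b>1-\tfrac{1}{p}$ and $a+b<\tfrac{1}{q}$ allow $a>0$ only when $\tfrac{1}{p}+\tfrac{1}{q}>1$, which is \emph{not} implied by $0<q<\tfrac{p}{p-2}$. For instance, with $p=4$ and $q=\tfrac{3}{2}$ one has $\tfrac{1}{p}+\tfrac{1}{q}=\tfrac{11}{12}<1$; then every admissible $b$ exceeds $\tfrac{3}{4}>\tfrac{1}{q}$, forcing $a<\tfrac{1}{q}-b<0$. In that regime your splitting ``$x^a\gtrless y-2$'' and the integral $\int_0^1 t^{1/a-q}\,\dtext t$ make no sense as written.

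The fix is simpler than your proposed argument and works for every admissible $a$. On the strip $0<x\leqslant 1$, $2\leqslant y\leqslant 3$ write $s=3-y\in[0,1]$; then
\[
J_\Phi \,=\, s\,(a+1)x^{a} \,+\,(1-s)\cdot 1
\]
is a convex combination of $(a+1)x^{a}$ and $1$, hence $J_\Phi\geqslant \min\big((a+1)x^{a},\,1\big)$. Consequently $J_\Phi^{-q}\leqslant (a+1)^{-q}x^{-aq}+1$, and finiteness reduces to $aq<1$, which follows directly from $a<\tfrac{1}{q}-b<\tfrac{1}{q}$ (no sign assumption on $a$ needed). This is precisely the kind of estimate the paper's ``just as above'' points to, since Case~2 of Example~\ref{ex:nonhomeo} also ends with $\int x^{-aq}\,\dtext x<\infty$. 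Incidentally, when $a\leqslant 0$ the same bound gives $J_\Phi\geqslant a+1>0$ uniformly, so there is no ``coupled degeneracy'' at all in that case.

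A minor terminological slip: the preimage argument you give establishes monotonicity in Morrey's sense (the one used to define $\mathscr M^p$), not ``in the sense of Lebesgue''.
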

 \subsubsection{Rescaling} Formula (\ref{ModelInequality}) can be rescaled to an arbitrary square $\,Q\subset \mathbb R^2\,$ in place of $\,\mathbf S\,$. Let us  discuss it in a somewhat greater context. For, choose and fix a prototype energy integral over a square $\,\mathbf S \subset \mathbb R^2\,$ centered at the origin and of side-length $\,L\,$,
 \begin{equation}\label{PrototypeEnergy}
 \mathscr E[\Phi] \bydef \frac{1}{|\mathbf S|} \int_\mathbf S \textrm{E}( D\Phi(y)) \,\textnormal dy \;,
 \end{equation}
 This integral is assumed to exist for some adequate mappings\;$\,\Phi : \mathbf S \onto \mathbf S \,$\,,\,$\Phi(0) = 0\,$.
 Note that the stored-energy integrand depends solely on the deformation gradient $\,D\Phi\,$. Now take any square $\,Q\,$ centered at $\,a\in \mathbb R^2 \,$ and of side-length $\ell$. Then the mapping
 \begin{equation}\label{RescalledMapping}
 h_Q : Q \onto Q\;, \; \textnormal{defined by} \;\; h_Q(x) \bydef  a + \frac{\ell}{L} \,\Phi\left(\frac{L}{\ell} (x-a)\right)
 \end{equation}
has the same average energy as $\Phi$,
 \begin{equation}\label{RescaledEquation}
 \mathscr E[h_Q] = \frac{1}{|\mathbf Q|} \int_\mathbf Q \textrm{E}( Dh_Q(x)) \,\textnormal dx  \; =\; \mathscr E[\Phi]
 \end{equation}
This is an obvious  consequence of the chain rule  $\,Dh_Q(x) = D\Phi(y)\,$, where $\,y = \frac{L}{\ell} (x-a)\,$ is a variable used as a substitution in the integral (\ref{PrototypeEnergy}).
For later use, it should be noted that if $\,\Phi\,$ is monotone, so is $\,h_Q\,$.  Also, if $\,\Phi\,$ is the identity map near $\,\partial \mathbf S\,$ then so is $\,h_Q\,$ near $\,\partial Q\,$.

\section{Cantor Type Construction of  Example~\ref{ex:branch}}\label{Construction}
\subsection{Construction of Cantor set}
We shall work with  closed squares whose sides are parallel to the standard coordinate axes of $\R^2$, but most of the definitions and formulas will be coordinate-free.
\subsubsection{Cornersquares} Suppose we are given a  square $\,Q \subset \mathbb R^2\,$ and a parameter $\,0 < \varepsilon < 1\,$. Write it as $\,Q =  I \times J\,$, where $\,I , J \, \subset \mathbb R\,$ are closed intervals of the same length $\,\ell = |I| = |J|\,$. These might be called respectively the horizontal and the vertical factors of $\,Q\,$. The notation $\,\varepsilon I\,$ and $\,\varepsilon J\,$ will stand for the intervals of the  same centers but  $\,\varepsilon\,$-times smaller in length, respectively. Cutting them out from  $\,I\,$ and $\,J\,$ gives the decompositions:
$$
I \setminus \varepsilon I =  I_- \cup I_+  \;\;\textnormal{and}\;\; J\setminus \varepsilon J = J^- \cup J^+\,
 $$
into the left and the right, as well as into the lower and the upper subintervals. Note that we suppressed the explicit dependence on $\,\varepsilon\,$ in the notation. This parameter will be determined later during our induction procedure. Now the Cartesian product consists of four sub-squares:
$$
(I \setminus \varepsilon I) \times (  J\setminus \varepsilon J)   \;=\;   Q^+_+\,\cup\, Q^+_-\,\cup\, Q^-_-\,\cup\, Q^-_+\, . 
 $$
 Explicitely, we have the formulas:
 $$
 Q^+_+\,\bydef I_+ \times J^+ \,,\;  Q^+_-\,\bydef I_- \times J^+ \,,\,  Q^-_-\, \bydef I_- \times J^- \,,\,  Q^-_+ \bydef I_+ \times J^-\, . 
 $$
Each of these sub-squares touches exactly one corner of $\,Q\,$, which motivates our calling $\,Q^+_+\,,\, Q^+_-\,,\, Q^-_-\,,\, Q^-_+\, \,$ the \textit{cornersquares} of $\,Q\,$; more precisely, the first generation of cornersquares. We shall also spot the so-called \textit{centersquare} of $\,Q\,$,  defined by $\,\varepsilon Q =  \varepsilon I \times \varepsilon J\,$, see the left hand side of Figure \ref{CornSquares}.
 \subsubsection{Second generation of cornersquares} Choose another positive $\,\varepsilon\,$-parameter, say  $\,\varepsilon = \varepsilon_2 \,$. Then every cornersquare of $\,Q\,$ gives rise to its own four cornersquares  determined by this parameter, see the middle part  of Figure \ref{CornSquares}.  In this way we obtain sixteen cornersquares of so-called second generation. According to our notation these are:

$$ \mbox{ $ Q^{+ +}_{++}\;\;Q^{++}_{+-}\;\; Q^{+-}_{+-}\;\; Q^{+-}_{++}\;\;   $} $$
$$ \mbox{$ Q^{+ +}_{- +}\;\; Q^{++}_{--}\;\; Q^{+-}_{--}\;\; Q^{+-}_{-+}\;\; $} $$
$$ \mbox{$ Q^{-+}_{-+}\;\; Q^{-+}_{--}\;\; Q^{--}_{--}\;\; Q^{--}_{-+}\;\;$} $$
$$ \mbox{$ Q^{-+}_{++}\;\; Q^{-+}_{+-}\;\; Q^{--}_{+-}\;\; Q^{--}_{++}\;\; $} $$

See also the third generation of 64 cornersquares in the right hand side of  Figure \ref{CornSquares}.

 \begin{figure}[h]
\centering
\includegraphics[width=12.5cm]{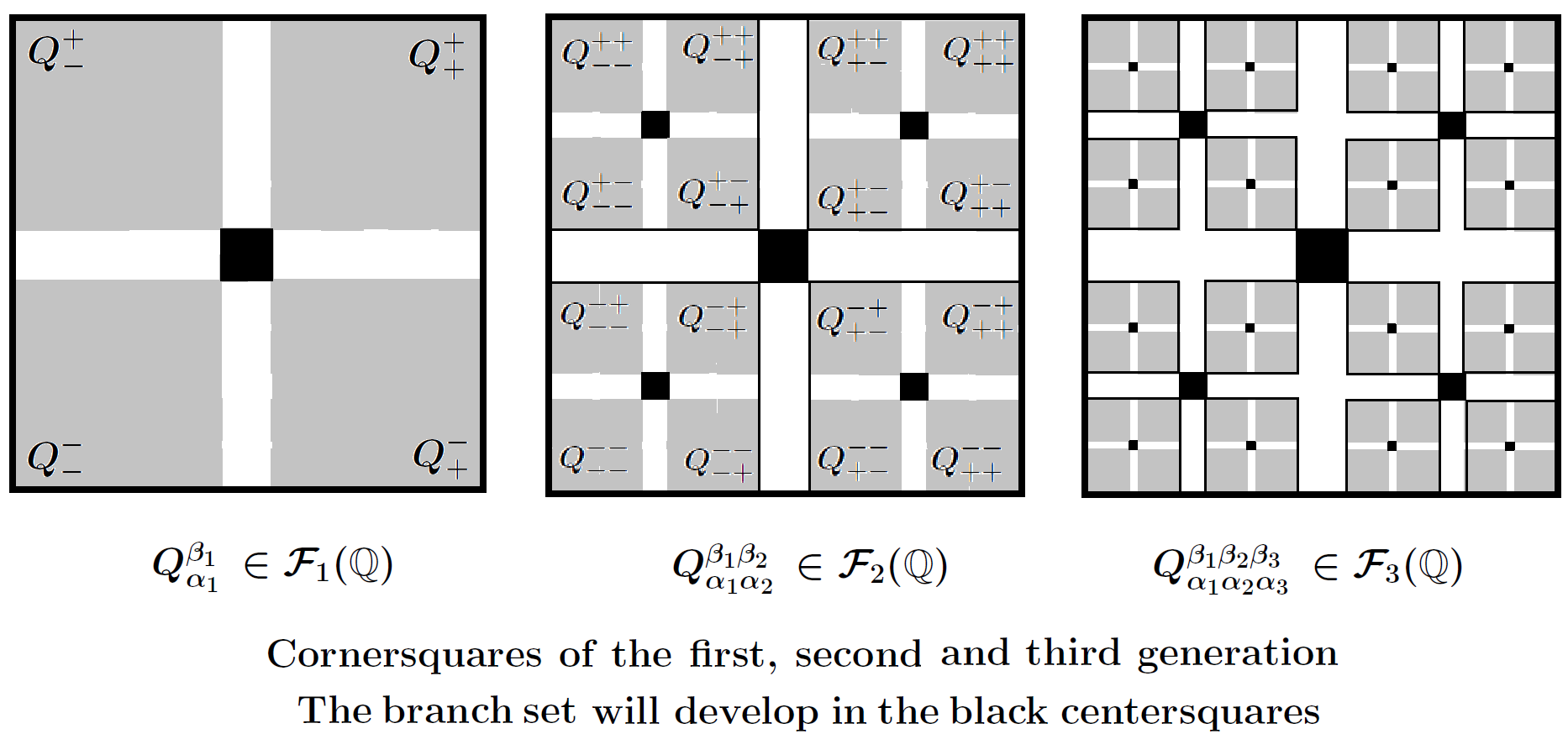}
\caption{Cornersquares as building blocs for a Cantor type construction}\label{CornSquares}
\end{figure}

\subsubsection{The induction procedure} Fix a sequence of $\varepsilon\,$-parameters rapidly decreasing to 0,  say $\,(\varepsilon_1, \varepsilon_2, ... )\,$ with $\varepsilon_n = 4^{-n}\,$. We begin with the base $\,1\times 1\,$ square $\,\mathbb Q\subset \mathbb R^2\,\,$ and the first $\,\varepsilon\,$- parameter equal to $\, \varepsilon_1\,$. This gives us the  first generation of four cornersquares $\,Q^{\beta_1}_{\alpha_1} \subset \mathbb Q\,$,  where both indices run over the set $\,\{+\,,\,-\}\,$. We let $\,\mathcal F_1\,$ denote this family of cornersquares.

In the second step we take $\varepsilon_2\,$ as the $\,\varepsilon\,$-parameter and look at the cornersquares of every $\,Q^{\beta_1}_{\alpha_1}\,$. Denote them  by $\,Q^{\beta_1\,\beta_2}_{\alpha_1\, \alpha_2} \subset Q^{\beta_1}_{\alpha_1}\,$, where $\,\alpha_2, \beta_2 \in \{+\,,\,-\}\,$.  They form the family $\,\mathcal F_2\,$ of second generation.  More generally, given the family $\,\mathcal F_n\,$ of cornersquares $\, Q^{\beta_1\,\beta_2...\beta_n}_{\alpha_1\, \alpha_2... \alpha_n}  \subset Q^{\beta_1\,\beta_2...\beta_{n-1}}_{\alpha_1\, \alpha_2... \alpha_{n-1}} \in \mathcal F_{n-1}\,$, we take $\,\varepsilon _{n+1}\,$ as the $\,\varepsilon\,$-parameter and adopt to the family $\,\mathcal F_{n+1}\,$ the $\,\varepsilon\,$-cornersquares of $\,Q^{\beta_1\,\beta_2...\beta_n}_{\alpha_1\, \alpha_2... \alpha_n} \,$; namely,
$$\, Q^{\beta_1\,\beta_2...\beta_n +}_{\alpha_1\, \alpha_2... \alpha_n +} \;,\;       Q^{\beta_1\,\beta_2...\beta_n +}_{\alpha_1\, \alpha_2... \alpha_n -} \;,\;       Q^{\beta_1\,\beta_2...\beta_n -}_{\alpha_1\, \alpha_2... \alpha_n -} \;,\;      \, Q^{\beta_1\,\beta_2...\beta_n -}_{\alpha_1\, \alpha_2... \alpha_n +} \;\in\; \mathcal F_{n+1} \, .$$
Thus $\,\mathcal F_{n+1}\,$ consists of  $\,4^{n+1}\,$ cornersquares denoted by  $\,Q^{\beta_1\,\beta_2...\beta_n \beta{n+1}}_{\alpha_1\, \alpha_2... \alpha_n \alpha_{n+1}}\,.$
This process continuous indefinitely.
\subsubsection{The size of squares in $\,\mathcal F_n\,$ and their total area} Let us compare the side-length of squares in $\,\mathcal F_{n+1} \,$ with those in $\,\mathcal F_n\,$.
Every member of $\,\mathcal F_{n+1}\,$ is a cornersquare of a $\,Q \in \mathcal F_n\,$ via the parameter $\,\varepsilon = \varepsilon_{n+1}\,$. Let $\,\ell\,$ denote the sidelength of $\,Q\,$.  We remove from $\,Q\,$ its centersquare  $\, \varepsilon Q\,$. Thus each of the remaining four cornersquares has side-length $\,\frac{1}{2} (1-\epsilon) \ell\,$. For $\,n=1\,$ this equals $\,\frac{1}{2} (1-\epsilon_1)\,$. Hence, by induction, the side-length of squares in $\,\mathcal F_n\,$ equals $\,\frac{1}{2^n} (1 - \varepsilon_1) (1 - \varepsilon_2) \cdots (1 - \varepsilon_n)\, < \frac{1}{2^n}\,$. We have $\,4^n\,$ such squares. This sums up  to the total area of the union
$$
\left|\,\bigcup \mathcal F_n \, \right| =  (1 - \varepsilon_1)^2 (1 - \varepsilon_2)^2 \cdots (1 - \varepsilon_n)^2
$$

\subsubsection{The Cantor set} We have a decreasing sequence of compact sets $\,\bigcup \mathcal F_1 \supsetneq \bigcup\mathcal F_2 \supsetneq ... \supsetneq \bigcup\mathcal F_n ...\,$. Cantor's Theorem tells us that their intersection is not empty,
$$
\mathcal C \bydef \bigcap_{n-1}^\infty  \left( \bigcup \mathcal F_n \right ) \;\not = \emptyset
$$
The measure of this Cantor set is positive.
\begin{equation}\label{eq:cantormeasure}
\left |\, \mathcal C \,\right | \;=\; \lim_{n \rightarrow \infty} \left|\,\bigcup \mathcal F_n \, \right|  = \prod_{k=1}^{\infty} ( 1 - \varepsilon _k)^2  \, > 0
\end{equation}
The latter inequality is a consequence of $\,\sum_{k=1}^\infty\, \varepsilon_k  \,< \infty \,$. 
Every point in $\,\mathcal C\,$ is obtained as intersection of exactly one decreasing sequence of the form
$$\,Q^{\beta_1}_{\alpha_1} \supsetneq Q^{\beta_1\beta_2}_{\alpha_1\alpha_2} \supsetneq
 \;...\; \supsetneq Q^{\beta_1\beta_2... \beta_n}_{\alpha_1\alpha_2... \alpha_n}
 \;... \,$$    An obvious consequence of this is:
 \begin{lemma}\label{NearCantorSet}
 Every open set that intersects $\,\mathcal C\,$ contains a square, say  $\, Q \in \mathcal F_n$ for sufficiently large $\,n\,$ which, in turn, contains its centersquare $\,\varepsilon_n Q \subset Q\,$.
 \end{lemma}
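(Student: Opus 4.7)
The plan is to locate, near any point of $\mathcal C$ inside the given open set $U$, a cornersquare from $\mathcal F_n$ whose diameter has already become so small that the whole square (and hence also its centersquare) is swallowed by $U$. The two ingredients we need from the construction are: (i) every point of $\mathcal C$ sits at the bottom of a nested chain of cornersquares, one from each generation; and (ii) the side-lengths of these cornersquares tend to zero geometrically.

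First I would pick a point $x \in U \cap \mathcal C$. By construction, $x$ lies in a uniquely determined decreasing sequence of cornersquares
\[
Q_1^{x} \supsetneq Q_2^{x} \supsetneq \cdots \supsetneq Q_n^{x} \supsetneq \cdots ,
\qquad Q_n^x \in \mathcal F_n ,
\]
with $x \in \bigcap_n Q_n^{x}$. From the sidelength estimate already recorded in the excerpt,
\[
\mathrm{side}(Q_n^{x}) \;=\; \frac{1}{2^n}\prod_{k=1}^n(1-\varepsilon_k) \;<\; \frac{1}{2^n},
\]
so $\diam Q_n^{x} < \sqrt{2}\cdot 2^{-n}$. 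Since $U$ is open, choose $r>0$ with $B(x,r)\subset U$. For any $n$ large enough that $\sqrt{2}\cdot 2^{-n} < r$, we get $Q_n^{x}\subset B(x,r)\subset U$, because $x \in Q_n^{x}$ and the diameter is smaller than $r$.

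Finally, by the very definition of the induction step, when $Q = Q_n^{x} \in \mathcal F_n$ is subdivided using the parameter $\varepsilon_n$ (or $\varepsilon_{n+1}$, depending on convention — the argument is insensitive to this shift), its centersquare $\varepsilon_n Q$ is a closed sub-square of $Q$ disjoint from the four children in $\mathcal F_{n+1}$. Hence $\varepsilon_n Q\subset Q \subset U$, and the required square $Q\in \mathcal F_n$ together with its centersquare has been produced. There is no serious obstacle here; the lemma is essentially a restatement of the fact that the chains of cornersquares form a neighbourhood basis at each point of $\mathcal C$, and the only care needed is to match the indexing of $\varepsilon_n$ with the generation of $Q$.
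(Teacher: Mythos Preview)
Your argument is correct and is precisely what the paper intends: it states the lemma as ``an obvious consequence'' of the fact that every point of $\mathcal C$ is the intersection of a unique decreasing chain of cornersquares, and your write-up simply makes this explicit via the diameter bound $\diam Q_n^x < \sqrt{2}\cdot 2^{-n}$. Your remark about the indexing of $\varepsilon_n$ versus $\varepsilon_{n+1}$ is also apt, since in the paper a square $Q\in\mathcal F_n$ is subdivided with parameter $\varepsilon_{n+1}$, so the centersquare is literally $\varepsilon_{n+1}Q$; as you note, this shift is immaterial for the conclusion.
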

 The idea behind this lemma is  a monotone mapping $\,h: \mathbb Q \onto\mathbb Q\,$ whose branch set will materialize in the centersquares.

\subsection{A monotone map $\,h: \mathbb Q \onto\mathbb Q\,$ } We let $\,\mathscr G\,$ denote the family of centersquares of all generations. From now on the need will not arise for the explicit dependence on multi-indices in the notation of centersquares.
For every $\,Q\in \mathscr G\,$  we have a monotone map $\,h_Q : Q \onto Q\,$  defined by Formula (\ref{RescalledMapping}) with $\,\Phi\,$ given in Lemma \ref{ModelMapping}. Thus the average $\,\textsf{E}^p_q\,$-energy of $\,h_Q\,$ does not depend on $\,Q\,$  and equals $\,\mathbf E\,$. In particular,
\begin{equation}\label{pNorms}
\int_Q \abs{Dh_Q(x)}^p \,\textnormal d x  \; <  \int_Q \Big( \abs{ Dh_Q(x) }^p \,+   [J_{h_Q}(x)]^{-q} \;\Big)  \textnormal d x  \; = |Q| \,\mathbf E \, . 
\end{equation}
Recall that $\,h_Q\,$ equals the identity map near $\,\partial Q\,$. Now we can define the map $\,h \in \mathscr M^p(\overline{\mathbb Q},\overline{ \mathbb Q})\,$ that is hunted by Example ~\ref{ex:branch}.
\begin{definition} \label{DefinitionOFh} We define the map $\,h : \mathbb Q \onto \mathbb Q\,$  by setting:
\begin{equation} \label{FormulaFORh}  h(x)=
\begin{cases}
h_Q(x), \,\,\,&  \textnormal{whenever } \, x \in Q \in \mathscr G \\
x \,, \, &\textnormal{otherwise.}
\end{cases}
\end{equation}
\end{definition}
Let us subtract the identity map.
\begin{equation} \label{FormulaFORh}  h(x) - x =
\begin{cases}
h_Q(x) - x  \,\bydef f_Q(x) , \,\,\, & \textnormal{whenever } \, x \in Q \in \mathscr G \\
0 \,, \, & \textnormal{otherwise.}
\end{cases}
\end{equation}
One advantage of using this is that $\,f_Q \in \mathscr W^{1,p}_0(Q)\,$. Actually, $\,f_Q\,$ vanishes near $\,\partial Q\,$. We have the infinite series
$$\,h(x) - x = \sum_{\Q \in \mathscr G} f_Q(x) \, , \quad \textnormal{in which}  \quad   \sum_{\Q \in \mathscr G} \int_Q |Df_Q |^p \; < \infty \, . $$
This latter inequality is due to the estimate (\ref{pNorms}). Now comes a general fact (rather folklore)  about Sobolev functions:
\begin{lemma} Let $\,\Omega \subset \mathbb R^n\,$  be a bounded domain and $\,\Omega_i \subset \Omega\,, i = 1, 2, ...\,$ disjoint open subsets. Suppose we are given functions $\,f_i \in \mathscr W^{1,p}_0(\Omega_i)\,$ such that  $\,\sum_{i = 1}^\infty \int_{\Omega_i} |Df_i(x)|^p \,\textnormal d x \,< \infty\,$. Then the function

\begin{equation*} \label{FormulaFORh}  f(x)=
\begin{cases}
f_i(x), \,\,\, & \textnormal{whenever } \, x \in \Omega_i\\
0\,, \, & \textnormal{otherwise}
\end{cases}
\end{equation*}
lies in the space $\,\mathscr W^{1,p}_0(\Omega)\,$.
\end{lemma}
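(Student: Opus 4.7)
The plan is to realize $f$ as the sum of the series $\sum_{i=1}^\infty \tilde f_i$, where $\tilde f_i$ denotes the extension of $f_i$ by zero outside $\Omega_i$, and to show that the partial sums $S_N = \sum_{i=1}^N \tilde f_i$ converge in the norm of $\mathscr W^{1,p}_0(\Omega)$. Since $\mathscr W^{1,p}_0(\Omega)$ is a closed subspace of the Banach space $\mathscr W^{1,p}(\Omega)$, the limit automatically lies in $\mathscr W^{1,p}_0(\Omega)$, and an elementary pointwise-limit argument identifies it with $f$.

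First I would verify that extension by zero is a bounded linear map $\mathscr W^{1,p}_0(\Omega_i) \hookrightarrow \mathscr W^{1,p}_0(\Omega)$. Given $\varphi_k \in \mathscr C^\infty_c(\Omega_i)$ with $\varphi_k \to f_i$ in $\mathscr W^{1,p}(\Omega_i)$, their zero extensions belong to $\mathscr C^\infty_c(\Omega)$ and converge to $\tilde f_i$ in $\mathscr W^{1,p}(\Omega)$, so indeed $\tilde f_i \in \mathscr W^{1,p}_0(\Omega)$ with $D \tilde f_i = (D f_i)\chi_{\Omega_i}$. In particular each $S_N$ lies in $\mathscr W^{1,p}_0(\Omega)$.

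Next, the disjointness of the $\Omega_i$ and the summability hypothesis yield, for $M>N$,
\[
\int_\Omega \abs{D(S_M - S_N)}^p\,\dtext x \;=\; \sum_{i=N+1}^M \int_{\Omega_i} \abs{Df_i}^p\,\dtext x \;\xrightarrow[N,M\to\infty]{}\; 0.
\]
Since $\Omega$ is bounded, Poincar\'e's inequality $\norm{u}_{\mathscr L^p(\Omega)} \le C(\Omega,p)\,\norm{Du}_{\mathscr L^p(\Omega)}$ valid on $\mathscr W^{1,p}_0(\Omega)$ upgrades this to Cauchyness in the full $\mathscr W^{1,p}$-norm. Hence $S_N$ converges to some $g\in \mathscr W^{1,p}_0(\Omega)$.

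Finally, by construction $S_N(x) \to f(x)$ at every point of $\Omega$, so the $L^p$-limit $g$ must agree with $f$ almost everywhere, giving $f = g \in \mathscr W^{1,p}_0(\Omega)$. I do not anticipate a serious obstacle here; the two technical points that deserve care are the extension-by-zero principle for $\mathscr W^{1,p}_0$-functions and the invocation of Poincar\'e's inequality, which is precisely why the hypothesis that $\Omega$ be bounded was imposed.
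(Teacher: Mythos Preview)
The paper does not supply a proof of this lemma at all; it is introduced explicitly as ``a general fact (rather folklore) about Sobolev functions'' and left unproved. Your argument is correct and is exactly the standard way one justifies such folklore: extension by zero sends each $f_i$ into $\mathscr W^{1,p}_0(\Omega)$, disjointness of the $\Omega_i$ makes the partial sums Cauchy in the gradient seminorm, Poincar\'e's inequality (available because $\Omega$ is bounded) upgrades this to Cauchyness in the full norm, and pointwise convergence identifies the limit with $f$. There is nothing to compare against and nothing to correct.
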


We conclude that $\,h \in \mathscr W^{1,p}(\mathbb Q,\mathbb Q)\,$  with $\,p > 2\,$ and, as such, is continuous on $\,\overline{\mathbb Q}\,$. As regards monotonicity, for each square (continuum)  $\,Q\,\in \mathscr G\,$ the mapping $\,h: Q \onto Q\,$ is monotone and $\,h\,$ is the identity outside those continua. This is enough to conclude that $\,h : \overline{\mathbb Q }\onto\overline{\mathbb Q}\,$ is monotone. We leave the details to the reader.\\

Finally, every point of the Cantor set $\,\mathcal C\,$ belongs to the branch set of $\,h\,$. Indeed, by Lemma \ref{NearCantorSet}, any neighborhood of this point contains a square $\,Q \in \mathscr G\,$ in which $\,h = h_Q\,$ fails to be injective. Thus the branch set $\,\mathcal B_h \,$ contains the Cantor set $\,\mathcal C\,$ and, therefore, has positive measure.  On the other hand, by the very definition,   $\,h(x) \equiv x\,$ on $\mathcal C\,$. Therefore $\,h(\mathcal B_h)\,$ also contains $\,\mathcal C\,$, so $\,h(\mathcal B_h)\,$  has positive measure as well.
\begin{remark}
The branch set $\,\mathcal B_h\,$ consists precisely of the Cantor set $\,\mathcal C\,$ and vertical segments in the centersquares $\,Q \in \mathscr G\,$, which are squeezed to the centers. This makes it clear that $\,h(\mathcal B_h) \, = \mathcal C\,$. It should be noted that the branch set $\mathcal B_h$ can have nearly full measure. This follows from the formula~\eqref{eq:cantormeasure} by letting $\varepsilon_k>0$ arbitrarily small.
\end{remark}
The proof of Example~\ref{ex:branch}  is complete.
\subsection{Greater generality} We are now in  a position to appreciate more general approach to the construction presented above. Let us begin with an arbitrary  bounded discrete set $\,\mathbf G\,$ of points in $\,\mathbb R^2\,$ whose limit set, denoted by $\,\mathbf C\,$, has positive area, see Figure~\ref{fig:limitset}. Clearly, $\,\mathbf G\,$ is necessarily countable. Moreover,  $\,\mathbf C \cup \mathbf G \,$ is a compact subset of a bounded domain $\,\Omega \subset \mathbb R^2\,$. Given a point in $\,\mathbf G\,$ we may (and do) choose a square $\,Q \subset \Omega \setminus \mathbf C\,$ centered at this point and small enough so that the family of all such squares, denoted as before by $\,\mathscr G $,  is disjoint. Analogously to Lemma \ref{NearCantorSet}, every open set that intersects $\,\mathbf C\,$ contains a square in $\mathscr G\,$.

\begin{figure}[h]
\centering
\includegraphics[width=10cm]{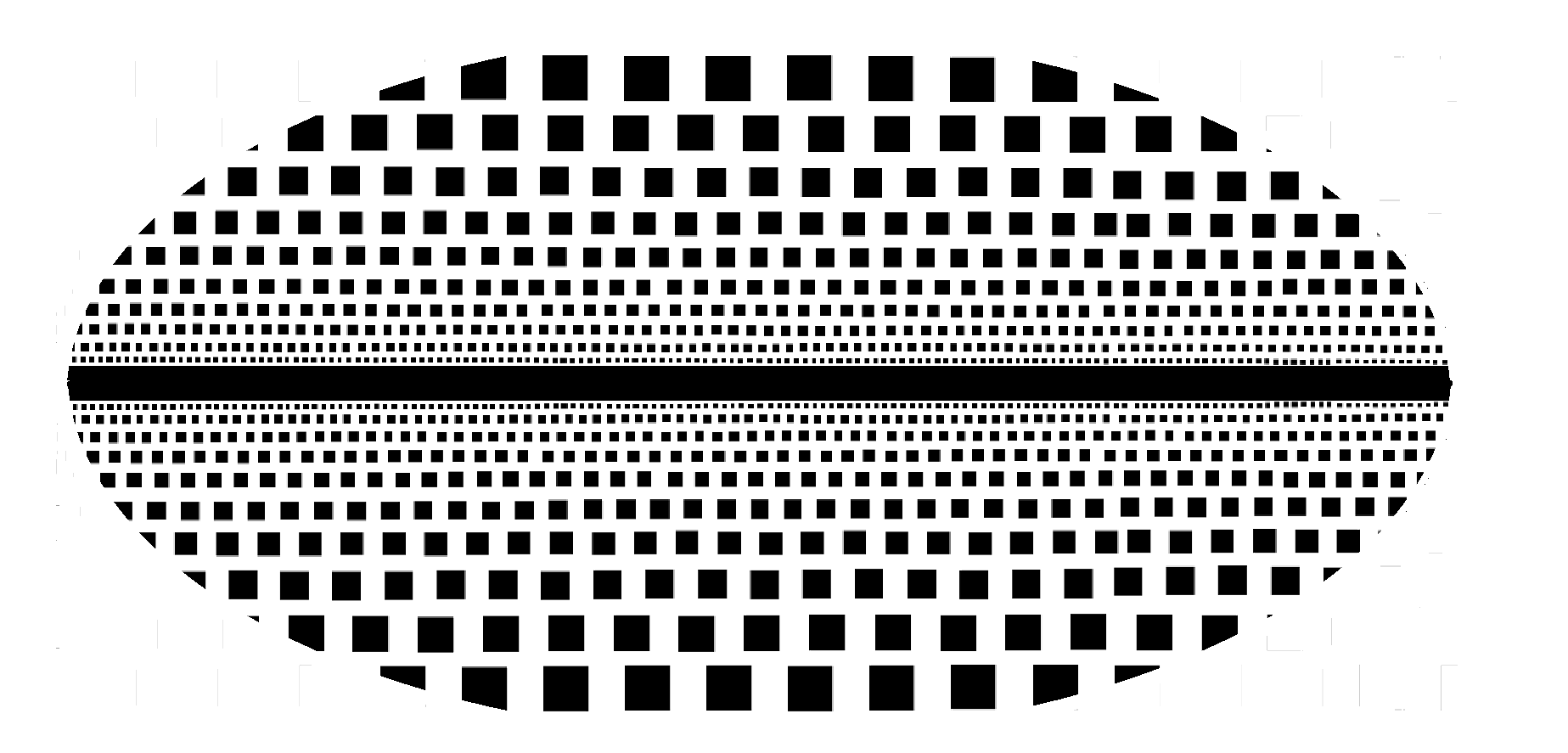}
\caption{Squares approaching the limit set}\label{fig:limitset}
\end{figure}

To every $\,Q \in \mathscr G\,$ there corresponds a monotone map $\,h_Q : Q \onto Q\,$ equal to the identity near  $\,\partial Q\,$. Recall the inequality (\ref{pNorms}),

\begin{equation}\label{pNorms2}
\int_Q |Dh_Q(x)^p \,\textnormal d x  \; <  \int_Q \Big (|Dh_Q(x)^p \,+   [J_{h_Q}(x)]^{-q} \;\Big)  \textnormal d x  \; = |Q| \,\mathbf E \, . 
\end{equation}
This yields
$$
\sum_{Q \in \mathscr G}\int_Q |Dh_Q(x)^p \,\textnormal d x  < \sum_{Q\in \mathscr G} \textsf E^p_q[h_Q]\; = \mathbf E \sum_{Q\in \mathscr G} |Q| \,< \mathbf L |\Omega| < \infty \, . 
$$
It is precisely this property that one needs to infer $\, g \in \mathscr W^{1,p}(\Omega) \cap \mathscr C(\overline{\Omega})\,.$
Exactly the same way as in Formula (\ref{FormulaFORh}), we define a map $\,g : \Omega \onto \Omega\,$ by the rule,
\begin{equation} \label{FormulaFORh2}  g(x)=
\begin{cases}
h_Q(x), \,\,\, &  \textnormal{whenever } \, x \in Q \in \mathscr G \\
x \,, \,  & \textnormal{otherwise.}
\end{cases}
\end{equation}
 Then we conclude in much the same way that $\, g \in \mathscr M^p(\overline{\Omega}, \overline{\Omega})\,$, $\,\textsf{E}^p_q [g] \,< \infty\,$, $\, |\mathcal B_g | > 0\,$ and $\, |g(\mathcal B_g) | > 0\,$.

\section{Proof of Theorem~\ref{thm:monoexistence}}
Let $\X $ and $\Y$ be $\ell$-connected bounded Lipschitz domains in $\R^2$.  Consider a family $\mathcal F$ of Sobolev orientation-preserving monotone mappings $h \colon \overline{\X} \onto \overline{\Y}$ such that
\begin{equation}
\mathsf E^p_q[h] = \int_\X \left(\abs{Dh}^p + \frac{1}{J_h^q}\right) \le E
\end{equation}
for all $h \in \mathcal F$. Here $\X, \Y$, $p\ge 2$, $q>0$ and $E<\infty$ are fixed.
\begin{lemma}\label{lem:unifmodcont}
The family $\mathcal F$ is equicontinuous. Precisely, there is a constant $C$ such that
\begin{equation}\label{eq:modcont}
\abs{h(x_1)-h(x_2)}^2 \le \frac{C \cdot E}{\log \left( 1+ \frac{C}{\abs{x_1-x_2}}\right)}
\end{equation}
for all $h\in \mathcal F$ and distinct points $x_1, x_2 \in \overline{\X}$.
\end{lemma}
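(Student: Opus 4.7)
The plan is to reduce the estimate for an arbitrary monotone $h\in\mathcal F$ to the corresponding estimate for Sobolev homeomorphisms provided by Lemma~\ref{lem:modofcontdirc}. The key enabling tool is the approximation theorem (Theorem~\ref{thmmono}), which replaces the missing injectivity with a controlled density of homeomorphisms.

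First I would control the Dirichlet energy in terms of the $\mathsf E^p_q$-energy. Since $p\ge 2$ and $|\X|<\infty$, Hölder's inequality gives
\[
\int_\X |Dh|^2\,\dtext x \,\le\, |\X|^{1-2/p}\left(\int_\X |Dh|^p\,\dtext x\right)^{2/p} \,\le\, |\X|^{1-2/p}\,E^{2/p},
\]
so every $h\in\mathcal F$ has uniformly bounded Dirichlet energy. Next, Theorem~\ref{thmmono} supplies a sequence of orientation-preserving homeomorphisms $h_j\colon \X\onto\Y$ converging to $h$ both uniformly on $\overline{\X}$ and in $\W^{1,p}(\X,\C)$. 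In particular $\int_\X |Dh_j|^2 \to \int_\X |Dh|^2$, so the sequence $\{h_j\}$ has Dirichlet energy bounded by a constant multiple of $E^{2/p}$ independent of $j$ and of $h$.

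Then I would apply Lemma~\ref{lem:modofcontdirc} to each $h_j \in \mathscr H^2(\X,\Y)$:
\[
|h_j(x_1)-h_j(x_2)|^2 \,\le\, \frac{C(\X,\Y)\int_\X |Dh_j|^2}{\log\!\left(1+\dfrac{\diam \X}{|x_1-x_2|}\right)},\qquad x_1,x_2\in\overline{\X}.
\]
The right-hand side is uniformly bounded by a constant multiple of $E^{2/p}/\log(1+\diam\X/|x_1-x_2|)$. Passing to the limit $j\to\infty$ along the uniformly convergent sequence $h_j\to h$ on $\overline{\X}$ transfers the estimate to $h$ itself, yielding~\eqref{eq:modcont} with a constant $C=C(\X,\Y,p,E)$ that can be made independent of $h\in\mathcal F$.

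The principal obstacle is the hypothesis $\ell\ge 2$ in Lemma~\ref{lem:modofcontdirc}; in the simply connected case the bare Dirichlet bound is insufficient, as Remark~\ref{rem:modofcontdirc} exhibits via the Möbius sequence. This is precisely where the neohookean term $\int_\X J_h^{-q}$ enters: a direct computation on the Möbius example shows $\int_\DD J_{h_k}^{-q}\to\infty$, so the uniform $\mathsf E^p_q$-bound forbids that boundary-collapsing behaviour. Thus for $\ell=1$ one would absorb the missing logarithmic control either by appealing to Morrey's embedding (when $p>2$) or by using the finiteness of $\int J_h^{-q}$ to quantitatively rule out the boundary degeneration, after which the approximation argument above closes the proof in the same fashion.
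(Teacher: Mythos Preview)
Your reduction via Theorem~\ref{thmmono} and Lemma~\ref{lem:modofcontdirc} is exactly what the paper does for the multiply connected case $\ell\ge 2$, and your passage to the limit is fine. The Morrey embedding also legitimately disposes of the simply connected case when $p>2$, since a H\"older modulus is stronger than the logarithmic one in~\eqref{eq:modcont}.

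The genuine gap is the case $\ell=1$, $p=2$. You correctly locate the obstruction and correctly name the remedy (the term $\int J_h^{-q}$ must forbid boundary collapse), but ``quantitatively rule out the boundary degeneration'' is not yet a proof: Lemma~\ref{lem:modofcontdirc} still requires $\ell\ge 2$, so even after you know the M\"obius-type degeneration cannot occur, you have no estimate to invoke. The missing mechanism is a \emph{reduction to the doubly connected case}. Concretely, after a bi-Lipschitz change reducing to $\X=\Y=\DD$, one fixes a disk $B\Subset\DD$ and uses H\"older's inequality in the form
\[
|B|=\int_B J_h^{\frac{q}{q+1}}\cdot J_h^{-\frac{q}{q+1}}\le |h(B)|^{\frac{q}{q+1}}\,E^{\frac{1}{q+1}},
\]
so $|h(B)|\ge |B|^{(q+1)/q}E^{-1/q}$ uniformly in $h\in\mathcal F$. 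Hence $h(B)$ cannot lie in an arbitrarily thin boundary annulus, and there is a point $a\in B$ with $|h(a)|\le 1-\varepsilon$ for some $\varepsilon=\varepsilon(E,B)>0$. Pre- and post-composing with M\"obius automorphisms of $\DD$ (with uniformly bounded bi-Lipschitz constants, since $a$ and $h(a)$ stay away from $\partial\DD$) normalizes $h(0)=0$ without losing control of the energy. Now $h\colon \DD\setminus\{0\}\onto\DD\setminus\{0\}$ is a homeomorphism between doubly connected domains, and Lemma~\ref{lem:modofcontdirc} applies. This normalization-and-puncture step is the idea your sketch is missing.
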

\begin{proof} Since $J_h\ge 0$ almost everywhere in $\X$ we have $h \in \mathscr M^p(\overline{\X} , \overline{\Y})$.  Now, for proving~\eqref{eq:modcont} we may  assume (equivalently) that $h\in \mathscr H^p (\X, \Y)$ with $\mathsf E^p_q[h] < \infty$, thanks to Theorem~\ref{thmmono}.  If $\X$ is multiple connected $\ell \ge 2$, then the modulus of continuity estimate~\eqref{eq:modcont} simply follows from the fact that the  Dirichlet energy of $h$ is uniformly bounded by the value of Neohookean energy $E$, see Lemma~\ref{lem:modofcontdirc}. Therefore, it suffices to consider the case of simply connected domains and $p=2$. It is worth recalling that if $\ell =1$, then $\int_\X \abs{Dh}^2 \le E$ is not enough to imply~\eqref{eq:modcont}, see Remark~\ref{rem:modofcontdirc}.

Let $\ell =1$ and $p=2$. We may assume without loss of generality that $\X = \mathbb D = \Y$.  Indeed, for any bounded Lipschitz domain $\Y$ there exists a global bi-Lipschitz change of variables $\Phi \colon \C  \to \C$  for which $\Phi (\Y)$ is the unit disk. Since the finiteness of the $\mathsf E^2_q$-energy is preserved under a bi-Lipschitz change of variables in both the target $\Y$ and the domain $\X$, we assume that  $\X$ and $\Y$ are unit disks. Choose and fix any disk $B=B(x_\circ,\delta) \Subset \X$. We have, for every $h \in \mathcal F$
\[
\begin{split}
\abs{B} & = \int_B J_h^{\frac{q}{q+1}} \cdot \frac{1}{J_h^\frac{q}{q+1}} \\
& \le \left( \int_B J_h \right)^\frac{q}{q+1} \left( \int_B \frac{1}{J^q_h} \right)^\frac{1}{q+1} \le \abs{h(B)}^\frac{q}{q+1} \cdot E^\frac{1}{q+1} \, .
\end{split}
\]
Hence
\[\abs{h(B)} \ge \abs{B}^\frac{q+1}{q} E^{-\frac{1}{q}}  \quad \textnormal{constant independent of } h\in \mathcal F \, . \]
Choose and fix $\varepsilon >0$ such that the annulus
\[\Delta_\varepsilon = \{y \in \Y \colon \dist (y, \partial \Y) \le \varepsilon\}\]
has measure smaller than $\abs{B}^\frac{q+1}{q} E^{-\frac{1}{q}} $. Thus $h(B) \not\subset \Delta_\varepsilon$ and, therefore,  there is a point $a\in B \Subset  \X$ such that $\abs{h(a)}<1 - \varepsilon$. In other words for every $ h \in \mathcal F$ we can find a point  $a \in \X$, with $\abs{a}\le 1-\delta$, and $b=h(a)\in \Y$, with $\abs{b}<1-\varepsilon$. Now consider conformal mappings $\varphi \colon \overline{\X} \onto \overline{\X}$, $\varphi (a)=0$, and $\psi \colon \overline{\Y} \to \overline{\Y}$, $\psi(b)=0$. Both mappings are bi-Lipschitz with bi-Lipschitz constants independent of $a$ and $b$. Thus the energy $\psi \circ h \circ \varphi \colon \X \onto \Y$ is controlled from above by that of $h$ uniformly in $\mathcal F$. Therefore, we may (and do) assume that $h(0)=0$. This leads us to the case of a homeomorphism $h \colon \mathbb D \setminus \{0\} \onto \mathbb D \setminus \{0\}  $; that is, between doubly connected domains. Finally, the inequality~\eqref{eq:modcont}  follows from Lemma~\ref{lem:modofcontdirc}, completing the proof of Lemma~\ref{lem:unifmodcont}.
\end{proof}

\begin{proof}[Proof of Theorem~\ref{thm:monoexistence}]  We apply the direct method in the calculus of variations. For that we take a minimizing sequence $h_k\in \mathscr M^p (\overline{\X}, \overline{\Y})$ of  the neohookean energy $\mathsf E_q^p$ which  converges weakly to $h_\circ$ in $\W^{1,p}(\X , \C)$. Note that here we also used our standing assumption that the class of admissible homeomorphisms is nonempty. Therefore, by Lemma~\ref{lem:poly} we have
\begin{equation}\label{eq:miniproof}
 \mathsf E^p_q [h_\circ] \le \liminf_{k\to \infty}  \mathsf E^p_q [h_k] = \underset{h\in \mathscr M^p (\overline{\X}, \overline{\Y})}{\inf} \mathsf E_q^p [h]   \, .
  \end{equation}
Since $\mathsf E_q^p [h_k] \le E < \infty$ for every $k \in \mathbb N$ and $h_k \to h_\circ$ weakly in $\W^{1,p}(\X , \C)$ applying Lemma~\ref{lem:unifmodcont} we see that the sequence $h_k$ also converges uniformly to $h_\circ$ in $\overline{\X}$. Now the mapping $h_\circ$, being a uniform limit  of monotone mappings $h_k \colon \overline{\X} \onto \overline{\Y}$, is a monotone map from $\overline{\X} $ onto $\overline{\Y} $. Therefore, $h_\circ \in \mathscr M^p (\overline{\X}, \overline{\Y})$. Combining this with~\eqref{eq:miniproof} we have
\[  \mathsf E^p_q [h_\circ] \le \underset{h\in \mathscr M^p (\overline{\X}, \overline{\Y})}{\inf} \mathsf E_q^p [h] \le \mathsf E^p_q [h_\circ]  \, ,  \]
finishing the proof of Theorem~\ref{thm:monoexistence}.
\end{proof}

\section{Proof of Theorem~\ref{thm:monodiscrete}}
\begin{proof}
First we are going to estimate the distortion function
\[ 1 \, \leqslant \, \mathscr K_h(x) \, \bydef \, \frac{|Dh(x)|^2}{2 J_h(x)}\]
by using Young's inequality:
\[ ABC \, \leqslant \, \alpha A^{\frac{1}{\alpha}} + \beta B^{\frac{1}{\beta}} + \gamma C^{\frac{1}{\gamma}}, \,\,\,\, A,B,C \geqslant 0; \, \alpha, \beta, \gamma \geqslant 0, \, \alpha+ \beta+ \gamma=1 \]
where we add here to the convention that  $\gamma C^{\frac{1}{\gamma}}=0$ for $\gamma=0$. The pointwise estimate of $\mathscr K_h$ by means of the energy integrand reads as when $p>2$.
\begin{eqnarray*}
2 \mathscr K_h(x)\, &\leqslant& \, \frac{2}{p} |Dh|^p + \frac{1}{q} \frac{1}{J_h^q} + \left( 1-\frac{2}{p} -\frac{1}{q} \right) \cdot 1\cr \cr
& \leqslant & \frac{2}{p} |Dh|^p + \frac{1}{q} \frac{1}{J_h^q} + \left( 1-\frac{2}{p} -\frac{1}{q}\right) \mathscr K_h
\end{eqnarray*}
Hence \[ \mathscr K_h \, \leqslant \, |Dh|^p +  \frac{1}{J_h^q}= \mathsf E(|Dh|, \det Dh)\]
Integrating over $\Omega$ we obtain \[
\|\mathscr K_h \|_{\mathscr L^{1}(\Omega)} \, = \, \int_{\Omega} \mathscr K_h (x) \dtext x \,  < \infty\]
For Remark~\ref{rem:16} concerning  the case $p=2$ and $q=\infty$ we argue as follows.
\[ \mathscr K_h\,\,=\,\, \frac{|Dh|^2}{2J_h}\,\, \leqslant \,\, \frac{C}{2}|Dh|^2\]
Hence
\[ \int_{\mathbb X} \mathscr K_h \,\,<\,\, \frac{C}{2} \int_{\mathbb X}|Dh|^2\,\, <\,\, \infty\]
In either case, $p>2$ or $p=2$ we see that the map $h \in \mathscr W^{1,2} (\Omega, \mathbb R^2)$ has integrable distortion.
It is known~\cite{IS} that such mappings $h \colon \Omega \rightarrow \mathbb R^2$ are discrete and open. In particular $h(\Omega)$, being an open subset of $\overline{\mathbb Y}$, is contained in $\mathbb Y$.
Next we show that $h \colon \Omega \onto h(\Omega)$ is injective. To this effect suppose, to the contrary, that $h(x_1)=h(x_2)\bydef y_\circ \in h(\Omega)$ for some points $x_1 \neq x_2$ in $\Omega$. The preimage $h^{-1}(y_\circ)$ under the map $h \colon \overline{\mathbb X} \onto \overline{\mathbb Y}$ is a continuum in $\overline{\mathbb X}$ which contains $x_1$, $x_2 \in \Omega$. This contradicts discreteness of $h \colon \Omega \rightarrow \mathbb R^2$.

Since $\Omega \Subset \mathbb X$ is arbitrary it follows that $h \colon \mathbb X \onto h (\mathbb X)$ is a homeomorphism as well. Finally, it remains to show that $h(\mathbb X )= \mathbb Y$. Certainly, $h(\mathbb X)$ being an open subset of $\overline{\mathbb Y}$, is contained in $\mathbb Y$.

Suppose there is $y_\circ \in \mathbb Y \setminus h(\mathbb X)$. But $y_\circ \in{\overline{\mathbb Y}}= h(\overline{\mathbb{X}})$, so $y_\circ =h(x_\circ)$ for some $x_\circ \in \overline{\mathbb X}$. On the other hand, the map $h \colon \overline{\mathbb X} \onto \overline{\mathbb Y}$, being monotone, takes $\partial \mathbb X$ onto $\partial \mathbb Y$.
\[ h(\partial \mathbb X)\,= \,  \partial \mathbb Y \]
This means that $x_\circ \notin \partial \mathbb X$, because $h(x_\circ)= y_\circ \notin \partial \mathbb Y$.\\
In conclusion,
\[ h( \mathbb X) =  \mathbb Y.\]
 \end{proof}

\section{Proof of Theorem~\ref{thm:hopfhomeo}}

 \begin{proof}
  First note that $s >p$ and $|Dh|^s \in \mathscr L_{\loc}^1(\mathbb X)$. The idea of the proof is to infer from ~\eqref{eq:innerp} that
  \begin{equation}\label{eq:claimlastpf}
  \frac{1}{J_h} \in \mathscr L_{\loc}^{\frac{sq}{p}}(\mathbb X) \, . 
  \end{equation}
  For this purpose consider the functions
 \begin{equation}\label{phi}
 \Phi = \left( 1-\frac{p}{2}\right) |Dh|^p + \frac{1+q}{J_h^q} \in \mathscr L_{\loc}^1(\mathbb X)
 \end{equation}
 \begin{equation}\label{Psi}
 \Psi = 2p |Dh|^{p-2} \overline{h_z} h_{\overline{z}} \in \mathscr L_{\loc}^{r}(\mathbb X, \mathbb C) ,\,\,\, \textnormal{ where } r= \frac{s}{p}>1
 \end{equation}
  The equation ~\eqref{eq:innerp} reads as
  \begin{equation}\label{2star}
  \Phi_{\overline{z}} = \Psi_z, \,\,\, \textnormal{ in the sense of distributions }
  \end{equation}
  We first observe that:
  \begin{lemma}\label{lemma112}
  For every subdomain $\Omega \Subset \mathbb X$ compactly contained in $\mathbb X$ it holds that \[ \Phi \in \mathscr L^r(\Omega) \]
   \end{lemma}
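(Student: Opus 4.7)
The strategy is to bootstrap the integrability of $\Phi$ from $\mathscr L^1_{\loc}(\mathbb X)$ to $\mathscr L^r_{\loc}(\mathbb X)$ by exploiting the distributional equation $\Phi_{\bar z} = \Psi_z$ in \eqref{2star} together with planar Calder\'on--Zygmund theory. The starting integrability $\Phi \in \mathscr L^1_{\loc}(\mathbb X)$ is immediate from $\mathsf E^p_q[h]<\infty$, while $\Psi \in \mathscr L^r_{\loc}(\mathbb X)$ follows from the pointwise estimate $\abs{\Psi} \le 2p\abs{Dh}^p$ together with $\abs{Dh}^s \in \mathscr L^1_{\loc}(\mathbb X)$ and $r = s/p$. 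Throughout I would denote by $\mathsf C$ the Cauchy transform (the right inverse of $\partial_{\bar z}$ on compactly supported distributions) and by $\mathsf S = \partial_z \mathsf C$ the Beurling transform.

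The first step is to derive a representation formula. Fix a cutoff $\eta \in \mathscr C^\infty_c(\mathbb X)$ with $\eta \equiv 1$ on a neighborhood of $\Omega$ and $\supp \eta \Subset \mathbb X$. Using the standard identity $\partial_{\bar z}\, \mathsf S[\eta\Psi] = \partial_z(\eta\Psi)$ valid for compactly supported $\eta\Psi$, and rewriting $\eta\Phi_{\bar z} = \eta\Psi_z = \partial_z(\eta\Psi) - \eta_z\Psi$, one obtains
\[\partial_{\bar z}\bigl(\eta\Phi - \mathsf S[\eta\Psi]\bigr) = \eta_{\bar z}\Phi - \eta_z\Psi,\]
where both sides have compact support in $\mathbb X$. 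Inverting $\partial_{\bar z}$ via the Cauchy transform yields the key representation
\[\eta\Phi = \mathsf S[\eta\Psi] + \mathsf C\bigl[\eta_{\bar z}\Phi - \eta_z\Psi\bigr].\]

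Next I would estimate each term by the standard mapping properties: $\mathsf S \colon \mathscr L^p(\mathbb R^2) \to \mathscr L^p(\mathbb R^2)$ is bounded for every $1 < p < \infty$, and $\mathsf C$ sends a compactly supported $\mathscr L^a$-function into $\mathscr L^{2a/(2-a)}$ when $1 \le a < 2$ (and into $\mathrm{BMO}$ or into a H\"older space when $a \ge 2$). Consequently $\mathsf S[\eta\Psi] \in \mathscr L^r$ and $\mathsf C[\eta_z\Psi] \in \mathscr L^r$ (in fact in a better space). The only delicate summand is $\mathsf C[\eta_{\bar z}\Phi]$: the initial information $\Phi \in \mathscr L^1_{\loc}$ places it merely in $\mathscr L^p_{\loc}$ for $p < 2$.

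The main obstacle is precisely that a single pass of the Cauchy transform on an $\mathscr L^1$-function does not reach $\mathscr L^2$, so a bootstrap with nested cutoffs is required. One application of the representation gives $\Phi \in \mathscr L^{p_1}_{\loc}(\mathbb X)$ for every $p_1 < 2$, which already settles the case $r \le 2$. For $r > 2$, a second application on a slightly smaller subdomain (with a new cutoff $\eta'$ supported where $\eta \equiv 1$) now yields $\mathsf C[\eta'_{\bar z}\Phi] \in \mathscr L^{2p_1/(2-p_1)}$, and since $2p_1/(2-p_1) \to \infty$ as $p_1 \nearrow 2$, one can arrange this exponent to exceed $r$. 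Combining with the already-controlled terms, at most two iterations deliver $\Phi \in \mathscr L^r(\Omega)$, completing the proof of Lemma \ref{lemma112}.
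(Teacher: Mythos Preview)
Your argument is correct, but you overlook a simplification that makes the bootstrap entirely unnecessary, and this is precisely the point the paper exploits. You correctly arrive at
\[
\partial_{\bar z}\bigl(\eta\Phi - \mathsf S[\eta\Psi]\bigr) = \eta_{\bar z}\Phi - \eta_z\Psi,
\]
but then fail to notice that the right-hand side \emph{vanishes identically on a neighborhood of $\overline{\Omega}$}, since $\eta\equiv 1$ there and hence $\eta_{\bar z}=\eta_z=0$. By Weyl's lemma the function $H \bydef \eta\Phi - \mathsf S[\eta\Psi]$ is therefore holomorphic near $\overline{\Omega}$, so in particular $H\in\mathscr L^\infty(\Omega)\subset\mathscr L^r(\Omega)$. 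Since $\mathsf S[\eta\Psi]\in\mathscr L^r(\mathbb C)$ by the $\mathscr L^r$-boundedness of the Beurling transform, one concludes $\Phi=\eta\Phi=H+\mathsf S[\eta\Psi]\in\mathscr L^r(\Omega)$ in one stroke. This is exactly the paper's proof.

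Your route via the Cauchy transform and a two-step bootstrap is not wrong, but it treats the term $\mathsf C[\eta_{\bar z}\Phi]$ as a genuine obstacle requiring iteration, when in fact $\eta_{\bar z}\Phi$ is supported \emph{away} from $\Omega$, so its Cauchy transform is holomorphic on $\Omega$ regardless of the integrability of $\Phi$. The nested-cutoff iteration, the Sobolev embedding for $\mathsf C$, and the case distinction $r\le 2$ versus $r>2$ are all superfluous. (A minor loose end in your write-up: the passage from $\partial_{\bar z}(\cdots)=g$ to the global identity $\eta\Phi=\mathsf S[\eta\Psi]+\mathsf C[g]$ requires a Liouville argument to kill the entire holomorphic remainder; this is routine, since all three pieces decay at infinity, but it should be said.)
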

  \begin{proof} of Lemma \ref{lemma112}. Choose and fix a function $\lambda \in C_0^{\infty}(\mathbb X)$ which equals $1$ in a neighborhood of $\overline{\Omega} \subset \mathbb X$. Then consider the expression defined in the entire complex plane by the rule
  \begin{equation}\label{1star}
  \lambda \Phi - S \lambda \Psi
  \end{equation}
  where $S \colon \mathscr L^r(\mathbb C) \rightarrow \mathscr L^r(\mathbb C)$, is the Beurling-Ahlfors transform.
  \[ (S f)(z)= -\frac{1}{\pi} \int_{\mathbb C}\int \frac{f(\xi) \dtext \xi}{(z-\xi)^2}, \,\,\, \textnormal{ for } f \in \mathscr L^{r}(\mathbb C)\]
The following identity is characteristic of Beurling-Ahlfors  transform
\[ \frac{\partial }{\partial \overline{z}}(S f)= \frac{\partial }{\partial z} f \,\, \textnormal{ for } f \in \mathscr L^r(\mathbb C)\]
The complex derivatives are understood in the sense of distributions. We may, and do, apply this identity to $f= \lambda \Psi$. Differentiating the expression \eqref{1star} with respect to $\overline{z}$ yields
\begin{eqnarray*}
\frac{\partial}{\partial \overline{z}} \left[ \lambda \Phi - S \lambda \Psi \right]&=&
\frac{\partial}{\partial \overline{z}} \left[ \lambda \Phi \right]- \frac{\partial}{\partial z} \left[ \lambda \Psi \right] \cr \cr
&=& \lambda_{\overline{z}} \Phi + \lambda \Phi_{\overline{z}} - \lambda_z \Psi- \lambda \Psi_z\cr \cr
&=&\lambda_{\overline{z}} \Phi- \lambda_z \Psi=0
\textnormal{ in a neighborhood of } \overline{\Omega}.   \end{eqnarray*}
Here we used the equation \eqref{2star}  and the fact the $\lambda \equiv 1$ in a neighborhood of $\overline{\Omega}$. Thus, by Weyl's Lemma the function:
\[ H = \lambda \Phi- S \lambda \Psi \]
is holomorphic in a neighborhood of $\overline{\Omega} \subset \mathbb X$ so $H \in \mathscr L^{r}(\Omega)$. In this neighborhood we express $H$ as \[  
H(z)= \Phi(z)+ \frac{1}{\pi} \int_{\mathbb C}\int \frac{\lambda(\xi) \Psi(\xi) \dtext \xi}{(z-\xi)^2} \]
The latter integral, being the Beurling-Ahlfors transform of $\lambda \Psi \in \mathscr L^r(\mathbb C)$, represents a function in $\mathscr L^r(\mathbb C)$. In conclusion $\Phi \in \mathscr L^r(\Omega)$.
    \end{proof}
    Now it follows by \eqref{phi} that $J_h^{-q} \in \mathscr L^{r}_{\loc}(\Omega)$, as claimed in~\eqref{eq:claimlastpf}. Combining this with  the assumption $|Dh|^s \in \mathscr L_{\loc}^1(\mathbb X)$ we have 
     \[ \int_{\Omega} \left( |Dh|^s+ {J_h^{-\frac{sq}{p}}}\right) < \infty \, , \]
   where $\Omega \Subset \X$, $s>2$ and $\frac{sq}{p} \geqslant \frac{s}{s-2}$.
  We are now in a position to apply Theorem~\ref{thm:monodiscrete} for $s$ in place of $p$ and $\frac{sq}{p}$ in place of $q$.  Therefore,
    \[ h \colon \mathbb X \onto \mathbb Y \]
    is a homeomorphism.
    \end{proof}

\end{document}